\numberwithin{equation}{section}
\newcommand{\ctL}{c_{\tilde{L}^{-1}}}
\newcommand{\calk}{\mathcal{K}}
\newcommand{\partt}{\frac{\partial}{\partial t}}
\newcommand{\supp}{\mathop{\mathrm{supp}\,}\nolimits}
\newcommand{\ds}{\displaystyle}
\newcommand{\real}{\mathbb{R}}
\newcommand{\nat}{\mathbb{N}}
\newcommand{\ud}{\mathrm{d}}
\newtheorem{lemma}{Lemma}[section]
\newtheorem{theorem}[lemma]{Theorem}
\newtheorem{corollary}[lemma]{Corollary}
\newtheorem{definition}[lemma]{Definition}
\newtheorem{assumption}[lemma]{Assumption}
\newtheorem{rem}[lemma]{Remark}
\newcommand{\remark}[1]{\begin{rem}{\upshape #1}\end{rem}}
\newtheorem{example}[lemma]{Example}
\newcommand{\proofstart}{\mbox{P\,r\,o\,o\,f\, :\quad}}
\newcommand{\proofend}{\nopagebreak\hfill\raisebox{0.3em}{\fbox{}}\\}
\newenvironment{proof}{\proofstart}{\proofend}
\newtheorem{problems}{Problem}
\newcommand{\D}{\mathcal{D}}
\newcommand{\V}{\mathcal{K}}
\newcommand{\bit}{\begin{itemize}}
\newcommand{\eit}{\end{itemize}}
\newcommand{\beq}{\begin{equation}}
\newcommand{\eeq}{\end{equation}}
\title{Besov regularity of non-linear parabolic PDEs on non-convex polyhedral domains }
\author{
Stephan Dahlke\footnote{Philipps-University Marburg,  FB12 Mathematics and Computer Science, Hans-Meerwein Stra\ss{}e, Lahnberge, 35032 Marburg, Germany. Email: \href{mailto:dahlke@mathematik.uni-marburg.de}{dahlke@mathematik.uni-marburg.de}}\ \thanks{The work of this author has been supported by Deutsche Forschungsgemeinschaft (DFG), Grant No. DA 360/22-1.} \ 
\qquad
Markus Hansen\footnote{Philipps-University Marburg,  FB12 Mathematics and Computer Science, Hans-Meerwein Stra\ss{}e, Lahnberge, 35032 Marburg, Germany. Email: \href{mailto:Markus.Hansen1@gmx.net}{Markus.Hansen1@gmx.net}}\ 
\qquad
Cornelia Schneider\footnote{\emph{Corresponding author}. Friedrich-Alexander University Erlangen-Nuremberg, Applied Mathematics III, Cauerstr. 11, 91058 Erlangen, Germany. Email: \href{mailto:cornelia.schneider@math.fau.de}{cornelia.schneider@math.fau.de}}\ \thanks{The work of this author has been supported by Deutsche Forschungsgemeinschaft (DFG), Grant No. SCHN 1509/1-2.\hfill 
\textcolor{black}{\today}
} 
}
\date{}
\begin{document}

\maketitle

\begin{abstract}
This paper is concerned with the regularity of  solutions to parabolic evolution equations. We consider semilinear problems on non-convex domains. 
Special attention is paid to the smoothness in the specific scale 
$\ B^r_{\tau,\tau}, \ \frac{1}{\tau}=\frac{r}{d}+\frac{1}{p}\ $ of Besov spaces.   The regularity in these spaces determines the approximation order
that can be achieved by adaptive and other nonlinear approximation schemes. We show that for all cases under consideration the Besov regularity is high enough to 
justify the use of adaptive algorithms. Our proofs are based on Schauder's fixed point theorem. 

{{\em Math Subject Classifications. Primary:}   35B65, 35K55, 46E35.   {\em Secondary:}  35A01, 46A03, 47H10, 65M12. }

{{\em Keywords and Phrases.} nonlinear parabolic PDEs, non-convex domains, Schauder's fixed point theorem, locally convex spaces,  Kondratiev spaces, Besov spaces, adaptivity.}

\end{abstract}

\tableofcontents

\section{Introduction}

In this paper we derive regularity estimates in weighted Sobolev and Besov spaces of solutions to
 evolution equations in non-smooth  domains of polyhedral type $D \subset \real^3$, cf. Definition \ref{standard}.
Precisely, we investigate   linear ($\varepsilon=0$) and nonlinear  ($\varepsilon>0$) equations of the form 
\begin{equation} \label{parab-1a-i}
\frac{\partial }{\partial t}u+L(x,D_x)u +\varepsilon u^{M}\ =\ f \quad    \text{ in }\  [0,T]\times D, 
\end{equation}
 with zero initial and Dirichlet boundary conditions, where  $m,M\in \nat$,  and $L$ denotes a uniformly elliptic differential  operator of order $2m$ with sufficiently smooth coefficients. 
 Special attention is paid to the spatial regularity of the
solutions to  (\ref{parab-1a-i})  in specific
non-standard  smoothness spaces, i.e., in the so-called {\em adaptivity
scale {of Besov spaces}}
\begin{equation} \label{adaptivityscale}
B^r_{\tau,\tau}(D), \quad \frac{1}{\tau}=\frac{r}{3}+\frac{1}{p}, \quad r>0.
\end{equation}

Our investigations are motivated by  fundamental questions arising in the context of the numerical treatment of equation  \eqref{parab-1a-i}. It is our goal to mathematically justify that  when solving parabolic PDEs on non-smooth domains  adaptive schemes can outperform non-adaptive ones. 
In an adaptive strategy, the choice of the underlying degrees of freedom is not a priori fixed but depends on the shape of the unknown solution. In particular, additional degrees of freedom are only spent in regions where the numerical approximation is still 'far away' from the exact solution.
Although the basic idea is convincing,  adaptive algorithms are hard to implement, so that beforehand a rigorous mathematical analysis to justify their use  is highly desirable. 
Since it has been shown that  the achievable order of adaptive algorithms depends on the regularity of the target function in the specific scale of Besov spaces \eqref{adaptivityscale}, whereas  on the other hand it is the regularity of the solution in the scale of Sobolev spaces, which encodes information on the convergence order for nonadaptive (uniform) methods,  we can draw the following conclusion: Adaptivity is justified,
 if the Besov regularity of the solution
in the Besov scale
(\ref{adaptivityscale}) is higher than its Sobolev smoothness!

For the case of {\em elliptic} partial differential equations, a lot of positive results in this direction are already established  \cite{Dah98, Dah99a, Dah99b,Dah02,DDD,DDHSW,Han15,HW18} inspired by the fundamental paper of Dahlke and DeVore \cite{DDV97}. It is well--known
that  if the domain under consideration, the right--hand side, and the coefficients are sufficiently smooth, then there 
is no reason why the  Besov smoothness should be higher than the Sobolev regularity \cite{ADN59}. However, on general Lipschitz domains and in particular in polyhedral domains, the situation changes
 completely: On these non-smooth domains, singularities at the boundary may occur
that diminish the Sobolev regularity of the solution significantly \cite{CW20,Cost19,JK95, Gris92, Gris11} (but
can be compensated by suitable weight functions).

To the best of our knowledge, not so many results in this direction are available for evolution equations 
so far.  First  results   for the special case of the heat equation have been reported 
in \cite{AGI08, AGI10, AG12}, but for a slightly different scale of Besov spaces.  
Inspired by these findings we studied parabolic equations of type \eqref{parab-1a-i} on polyhedral type domains  in the  paper \cite{DS20}  and its forerunner \cite{DS19} (on polyhedral cones).

 The results obtained in \cite{DS19,DS20} are very promising 
 and indicate that (as in the elliptic case) the appearing boundary singularities do not influence the Besov regularity too much. 
 However, as a drawback of the methods we used (Banach's fixed point theorem), the nonlinear regularity results obtained in \cite[Thms. 4.13, 5.6]{DS20} only hold for convex domains so far, cf. the explanations given in \cite[Rem.~3.8]{DS19}. The main reason for this were the restrictions we had to impose on the weight parameter $a$ appearing in the definition of the weighted Sobolev spaces $\mathcal{K}^m_{p,a}(D)$ (so-called Kondratiev spaces), which we used for studying  the regularity of the solution. In particular, the weight $\rho(x)^{a}$ involved  measures only the distance to the singular set of the domain (i.e., the edges and vertices of the polyhedral type domains) and it turned out that there is no suitable $a$ satisfying all of our requirements in case of a  non-convex domain.

 However, since on these  domains much more severe singularities might occur, we aim at removing this restriction on the parameter $a$. This will be done by imposing  stronger assumptions on the right--hand side $f$, requiring that it is  arbitrarily smooth with respect to time, i.e., 
 \[
 f\in \bigcap_{l=0}^{\infty}H^l([0,T], L_2(D)\cap \mathcal{K}^{\eta-2m}_{2,a}(D)). 
 \]
We already pursued this possibility in \cite{DS20} when studying the linear equation \eqref{parab-1a-i} with $\varepsilon=0$ and were able to weaken the restrictions on the parameter $a$ in order to allow a larger range. But unfortunately, the generalization of the results  to nonlinear problems was not straightforward, since the right hand sides are not taken from a Banach or a quasi Banach space. Therefore, 
we now invoke Schauder's fixed point theorem which allows us to work with  the coarse topology of the locally convex topological vector spaces from the right hand side. The price to pay for this is that we only get existence but not uniqueness of the solutions.

Let us summarize our results: In the linear case $\varepsilon=0$ it is known that if the right-hand side as well as its time derivatives are
contained in specific Kondratiev spaces, then  for every $t \in [0,T]$  the spatial Besov smoothness  
of the solution to \eqref{parab-1a-i} is always larger than  $2m$, provided that some technical conditions on the operator pencils are satisfied, see \cite{DS20}.
The reader should observe that the results are independent of the shape of the polyhedral domain, and that the classical Sobolev smoothness
in the extreme case might be limited by  $m$, see \cite{LL15}.  Therefore, for every $t$  the spatial Besov regularity can be  more than twice as
high as the Sobolev smoothness, which of course justifies the use of (spatial) adaptive algorithms. 
Moreover,  for smooth domains
and  right-hand sides in $L_2,$ the best one could  expect would be smoothness order $2m$ in 
the classical Sobolev scale. Hence, the Besov smoothness on polyhedral type domains is at least as high as the Sobolev smoothness 
on smooth domains. \\
In this paper we generalize this result to nonlinear parabolic  equations of the form \eqref{parab-1a-i}. We prove that in an intersection of  sufficiently small balls containing the solution of the corresponding linear equation, there exists
a solution to  
\eqref{parab-1a-i} possessing the same Besov smoothness in the scale 
\eqref{adaptivityscale}.  The proof is performed by a technically quite involved application of Schauder's fixed point theorem. 
The final result is stated in Theorem \ref{nonlin-B-reg3}. \\

In conclusion, the results presented in this paper imply that for each $t \in (0,T)$ the spatial Besov regularity of the unknown solutions of the problems studied here is much higher than the Sobolev regularity,  which justifies the use of spatial adaptive  algorithms.  This corresponds to the classical time-marching schemes such as the Rothe method. We refer e.g. to the monographs \cite{Lan01, Tho06} for a detailed discussion. Of course,  it would be tempting to employ adaptive strategies in the whole space-time cylinder.  First results in this direction have been reported in \cite{SS09}.  To justify also these schemes,  Besov regularity in the whole space-time cylinder has to be established. This case will be studied in a forthcoming paper.\\

\section{Preliminaries}
\label{app-not}

We collect some  notation used throughout the paper. As usual,  we denote by $\nat$ the set of all natural numbers, $\nat_0=\mathbb N\cup\{0\}$, and 
$\real^d$, $d\in\nat$,  the $d$-dimensional real Euclidean space with $|x|$, for $x\in\real^d$, denoting the Euclidean norm of $x$. 
By $\mathbb{Z}^d$ we denote the lattice of all points in $\real^d$ with integer components. 
For $a\in\real$, let  
$[a]$ denote its integer part and $a_+:=\max(a,0)$. \\
Moreover,  $c$ stands for a generic positive constant which is independent of the main parameters, but its value may change from line to line. 
The expression $A\lesssim B$ means that $ A \leq c\,B$. If $A \lesssim
B$ and $B\lesssim A$, then we write $A \sim B$.  

Given two quasi-Banach spaces $X$ and $Y$, we write $X\hookrightarrow Y$ if $X\subset Y$ and the natural embedding is bounded. By $\supp f$ we denote the support of the function $f$. For a domain $\Omega\subset \real^d$ and $r\in \nat\cup \{\infty\}$ we write $C^r(\Omega)$ for the space of all {real}-valued $r$-times continuously differentiable functions, 
whereas $C(\Omega)$ is the space of bounded uniformly continuous functions, and  $\mathcal{D}(\Omega)$ for the set of test functions, i.e., the collection of all infinitely differentiable functions defined on $\real^d$ with  compact support contained in $\Omega$. Moreover,  $L^1_{\text{loc}}(\Omega)$ denotes the space of locally integrable functions on $\Omega$. \\
For  a multi-index  $\alpha = (\alpha_1, \ldots,\alpha_d)\in \nat_0^d$ with  $|\alpha| := \alpha_1+\ldots+ \alpha_d=r$   and an $r$-times differentiable function $u:\Omega\rightarrow \real$, we write 
\[
D^{(\alpha)}u=\frac{\partial^{|\alpha|}}{\partial x_1^{\alpha_1}\dots \partial x_d^{\alpha_d}} u
\]
for the corresponding classical partial derivative as well as $u^{(k)}:=D^{(k)}u$ in the one-dimensional case. Hence, the space $C^r(\Omega)$ is normed by 
\[
\| u| C^r(\Omega)\|:=\max_{|\alpha|\leq r}\sup_{x\in \Omega}|D^{(\alpha)}u(x)|<\infty. 
\]
Moreover, $\mathcal{S}(\real^d)$ denotes the Schwartz space of rapidly decreasing functions. The set of distributions on $\Omega$ will be denoted by $\mathcal{D}'(\Omega)$, whereas $\mathcal{S}'(\real^d)$ denotes the set of tempered distributions on $\real^d$. The terms {\em distribution} and {\em generalized function} will be used synonymously. For the application of a distribution $u\in \mathcal{D}'(\Omega)$ to a test function $\varphi\in \mathcal{D}(\Omega)$ we write $(u,\varphi)$. The same notation will be used if $u\in \mathcal{S}'(\real^d)$ and $\varphi\in \mathcal{S}(\real^d)$ (and also for the inner product in $L_2(\Omega)$).  For $u\in \mathcal{D}'(\Omega)$  and a multi-index $\alpha = (\alpha_1, \ldots,\alpha_d)\in \nat_0^d$, we write $D^{\alpha}u$ for the $\alpha$-th {\em generalized} or {\em distributional derivative} of $u$ with respect to $x=(x_1,\ldots, x_d)\in \Omega$, i.e., $D^{\alpha}u$ is a distribution on $\Omega$, uniquely determined by the formula   
\[
(D^{\alpha}u,\varphi):=(-1)^{|\alpha|}(u,D^{(\alpha)}\varphi), \qquad \varphi \in \mathcal{D}(\Omega). 
\]
{In particular, if  $u\in L^1_{\text{loc}}(\Omega)$ and  there exists a function $v\in L^1_{\text{loc}}(\Omega)$ such that 
\[
\int_\Omega v(x)\varphi(x)\ud x=(-1)^{|\alpha|}\int_{\Omega}u(x)D^{(\alpha)}\varphi(x)\ud x \qquad \text{for all} \qquad \varphi \in \mathcal{D}(\Omega), 
\]
we say that $v$ is the {\em $\alpha$-th weak derivative} of $u$ and  write $D^{\alpha}u=v$. 
}
We also use the notation $
\frac{\partial^k}{\partial x_j^k}u:=D^{\beta}u
$ as well as $\partial_{x_j^k}:=D^{\beta}u$,   for some 
multi-index  $\beta=(0,\ldots, k, \ldots,0)$ with $\beta_j=k$, $k\in \nat$. Furthermore, for $m\in \nat_0$, we write $D^mu$ for any (generalized) $m$-th order derivative of $u$, where $D^0u:=u$ and $Du:=D^1u$. Sometimes we shall use subscripts such as $D^m_x$ or  $D^m_t $ to emphasize that we only take derivatives with respect to $x=(x_1, \ldots, x_d)\in \Omega$ or $t\in \real$.

\section{Sobolev, Kondratiev, and Besov spaces}

In this section we briefly collect the  basics  concerning   weighted and unweighted Sobolev  spaces as well as Besov spaces needed later on. In particular, we put $H^m=W^m_2$ and denote by $\mathring{H}^m$ the closure of test functions in $H^m$ and its  dual space by $H^{-m}$. 
By $H^m_{|\cdot|}$ we denote the spaces $H^m$ w.r.t. their half norms $|u|_{H^m}:=\|\partial_m u|L_2\|$ involving only the highest derivatives. We will work with these spaces later on due to technical reasons.

Moreover,  $\mathcal{C}^{k,\alpha}$, $k\in \nat_0$, stands for the usual H\"older spaces with exponent $\alpha\in (0,1]$. 
The following generalized version of Sobolev's embedding theorem for Banach-space valued functions will be useful, cf. \cite[Thm.~1.2.5]{co-habil}.

\begin{theorem}[{\bf Generalized Sobolev's embedding theorem}]\label{thm-sob-emb}\index{Sobolev's embedding theorem! generalized}
Let $1<p<\infty$,  $m\in \nat$, $I\subset \real$ be some bounded interval, and $X$ a Banach space. Then 
\begin{equation}
W^{m}_p(I,X)\hookrightarrow \mathcal{C}^{m-1,1-\frac 1p}(I,X). 
\end{equation}
\end{theorem}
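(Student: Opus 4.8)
The plan is to reduce the Banach-space-valued statement to the classical scalar Sobolev embedding via a standard abstract argument. First I would recall the scalar case: for a bounded interval $I$ and $1<p<\infty$, one has $W^m_p(I)\hookrightarrow \mathcal{C}^{m-1,1-1/p}(I)$, which follows from the fundamental theorem of calculus combined with H\"older's inequality — writing $u^{(m-1)}(x)-u^{(m-1)}(y)=\int_y^x u^{(m)}(s)\,\ud s$ and estimating $\left|\int_y^x u^{(m)}(s)\,\ud s\right|\le \|u^{(m)}\|_{L_p(I)}\,|x-y|^{1-1/p}$ gives the H\"older seminorm of the top derivative, while the lower-order derivatives and the sup-norm are controlled by iterating this and using boundedness of $I$. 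I would then observe that this reasoning is insensitive to whether the values lie in $\real$ or in a Banach space $X$, provided the integrals are understood as Bochner integrals: the only ingredients used are the Bochner fundamental theorem of calculus for $W^{1,p}(I,X)$-functions and the vector-valued H\"older inequality $\left\|\int_y^x g(s)\,\ud s\right\|_X\le \|g\|_{L_p(I,X)}|x-y|^{1-1/p}$, both of which hold verbatim.

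Concretely, the key steps in order: (1) By definition of $W^m_p(I,X)$, the distributional derivatives $u,Du,\dots,D^mu$ all lie in $L_p(I,X)$; in particular $D^{m-1}u\in W^1_p(I,X)$. (2) Apply the vector-valued fundamental theorem of calculus: $D^{m-1}u$ has an absolutely continuous representative with $(D^{m-1}u)(x)-(D^{m-1}u)(y)=\int_y^x (D^m u)(s)\,\ud s$ for all $x,y\in I$. (3) Estimate $\|(D^{m-1}u)(x)-(D^{m-1}u)(y)\|_X\le \|D^m u\|_{L_p(I,X)}|x-y|^{1-1/p}$ by H\"older, giving the $\mathcal{C}^{0,1-1/p}$-seminorm bound for $D^{m-1}u$. (4) Since $I$ is bounded, control $\sup_{x\in I}\|(D^{m-1}u)(x)\|_X$ by combining the previous bound with $\frac{1}{|I|}\int_I \|(D^{m-1}u)(y)\|_X\,\ud y\le |I|^{-1/p}\|D^{m-1}u\|_{L_p(I,X)}$. (5) Iterate steps (2)–(4) downward for $D^{m-2}u,\dots,u$: each lower derivative is now known to be continuous and bounded, so integrating again yields boundedness and continuity, completing the estimate $\|u\mid \mathcal{C}^{m-1,1-1/p}(I,X)\|\lesssim \|u\mid W^m_p(I,X)\|$.

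The main obstacle, such as it is, is purely a matter of citing the correct vector-valued analogues rather than any genuine difficulty: one must ensure that the Bochner-integral fundamental theorem of calculus and the absolute continuity of Sobolev functions are available in the $X$-valued setting. These are classical and can simply be quoted (e.g. from the monograph \cite{co-habil} referenced in the statement, or standard texts on vector-valued function spaces); since the present theorem is stated as a known result taken from \cite[Thm.~1.2.5]{co-habil}, the cleanest route is to defer to that reference for the full details and merely indicate the scalar-to-vector-valued reduction above. No convexity or geometry of the domain enters here; this is a one-dimensional interval statement used later only as a soft tool.
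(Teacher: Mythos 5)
Your argument is correct and is the standard proof of this embedding: reduce to the top derivative via the Bochner-integral fundamental theorem of calculus, get the H\"older seminorm from the vector-valued H\"older inequality, control the sup-norm by averaging over the bounded interval, and iterate downward. The paper itself offers no proof but simply cites \cite[Thm.~1.2.5]{co-habil}, so there is nothing to compare beyond noting that your sketch is exactly the expected scalar-to-vector-valued transcription and that deferring the Bochner-space technicalities to that reference, as you propose, is appropriate.
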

Here the Banach-valued  Sobolev spaces are endowed with the  norm 
\[
\|u|W^{m}_p(I,X)\|^p:=\sum_{k=0}^m \|\partial_{t^k}u|L_p(I,X)\|^p \quad \text{with}\quad \|\partial_{t^k}u|L_p(I,X)\|^p:=\int_I \|\partial_{t^k}u(t)|X\|^p~\ud t,  
\]
whereas for the H\"older spaces we use 
\[
\|u|\mathcal{C}^{k,\alpha}(I,X)\|:=\|u|C^k(I,X)\|+|u^{(k)}|_{C^{\alpha}(I,X)},
\]
where $\|u|C^k(I,X)\|=\sum_{j=0}^k\max_{t\in I}\|u^{(j)}(t)|X\|$ and $|u^{(k)}|_{C^{\alpha}(I,X)}=\sup_{{s,t\in I,}\atop  {s\neq t}}\frac{\|u^{(k)}(t)-u^{(k)}(s)|X\|}{|t-s|^{\alpha}}$. \\

\medskip 

Later on, we shall use the following equivalent norm in $H^m(I,X)$. 
\begin{theorem}\label{thm-eq-norm}
Let $I\subset \real$ be some bounded interval,   $X$ be a Banach space, and $m\in \nat$. Then the  norm $\|\cdot|L_2([0,T],X)\|+|\cdot|_{H^{m}([0,T],X)}$ involving only the derivatives of highest order $m$ is equivalent to the standard norm in $H^m(I,X)$, i.e., it holds 
$$   \|\cdot| H^{m}([0,T],X)\|\sim_m \|\cdot|L_2([0,T],X)\|+|\cdot|_{H^{m}([0,T],X)},$$
where the appearing constants  may depend on $m$. 
\end{theorem}

\begin{proof}
Let  $u\in H^{m}([0,T],X)$. We only have to show that  
\begin{equation}\label{ineq-norm}  \|u| H^{m}([0,T],X)\|\leq c\left( \|u|L_2([0,T],X)\|+|u|_{H^{m}([0,T],X)}\right),
\end{equation}
since  the other direction is obvious. For this we indirectly assume that there is no constant $c>0$ such that  \eqref{ineq-norm} holds. This implies the existence of functions $u_n\in H^{m}([0,T],X)$, $n\in \nat$,  such that 
\[
\|u_n|H^{m}([0,T],X)\|> n \left(\|u_n|L_2([0,T],X)\|+|u_n|_{H^{m}([0,T],X)}\right). 
\]
We put  $\ds v_n:=\frac{u_n}{\|u_n|H^{m}([0,T],X)\|}$, which yields 
\[
\|v_n|H^{m}([0,T],X)\|=1\qquad \text{and}\qquad  \|v_n|L_2([0,T],X)\|+|v_n|_{H^{m}([0,T],X)}<\frac 1n.
\]
In particular, we see that $|v_n|_{H^{m}([0,T],X)}<\frac 1n$. Since $v_n$ is uniformly bounded in $H^{m}([0,T],X)$, from  the compactness of the embedding $H^{m}([0,T],X)\hookrightarrow H^{m-1}([0,T],X)$, we deduce the existence of a convergent subsequence $v_{n'}\in H^{m-1}([0,T],X)$, whose limit we denote by $v$, i.e., it holds 
\[
\|v-v_{n'}|H^{m-1}([0,T],X)\|\rightarrow 0\quad \text{as} \quad n'\rightarrow \infty. 
\]
As $v_{n'}\rightarrow v$ in $H^{m-1}$ we particularly have $v_{n'}\rightarrow v$ in $L_2$. Moreover, per assumption on $v_n$ we know
$$\|v_n|L_2([0,T],X)\|<\frac{1}{n}$$
for all $n\in\nat$, from which we conclude $\|v|L_2([0,T],X)\|=0$, thus $v=0$ a.e.
Then obviously also $v\in H^m([0,T],X)$, and we can further estimate
\begin{align*}
	\|v-v_{n'}|H^m([0,T],X)\|
		&=|v-v_{n'}|_{H^m([0,T],X)}+\|v-v_{n'}|H^{m-1}([0,T],X)\| \
		\rightarrow 0\quad \text{as} \quad n'\rightarrow \infty, 
\end{align*}
i.e. we conclude $v_{n'}\rightarrow 0$ in $H^m([0,T],X)$, which is a contradiction to the assumption
$\|v_n|H^m([0,T],X)\|=1$ for all $n\in\nat$.
\end{proof}

We now collect some notation for specific Banach-space valued Lebesgue and Sobolev spaces, which will be used when studying the regularity of solutions of parabolic PDEs. \\

Let   $\Omega_T:=[0,T]\times\Omega$. Then we abbreviate/identify    
\[
L_p(\Omega_T):=L_p([0,T], L_p(\Omega)).    
\]\label{extra-1}

Moreover, we put \label{extra-4}
\[
H^{l,m*}(\Omega_T):=H^{l-1}([0,T],\mathring{H}^m(\Omega))\cap H^l([0,T],H^{-m}(\Omega)) 
\]
normed by 
\[\|u|H^{l,m*}(\Omega_T)\|=\|u|H^{l-1}([0,T],\mathring{H}^m(\Omega))\|+\|u|H^l([0,T],H^{-m}(\Omega))\|.\]

\paragraph{Kondratiev spaces}

In the sequel we work to a great extent with   weighted Sobolev spaces,  the so-called {\em Kondratiev spaces} $\V^m_{p,a}(\mathcal{O})$,   defined as the collection of all  $u\in \mathcal{D}'(\mathcal{O})$, which have $m$ generalized derivatives satisfying 

\begin{equation}\label{Kondratiev-1}
\|u|\V^m_{p,a}(\mathcal{O})\|:=\left(\sum_{|\alpha|\leq m}\int_{\mathcal{O}} |\varrho(x)|^{p(|\alpha |-a)}|D^{\alpha}_x u(x)|^p\ud x\right)^{1/p}<\infty,
\end{equation}
where $a\in \real$, $1<p<\infty$, $m\in \nat_0$, $\alpha\in \nat^n_0$, and the weight function $\varrho: \mathcal{O}\rightarrow [0,1]$ is the smooth distance to the singular set of $\mathcal{O}$, i.e., $\varrho$ is a smooth function and in the vicinity of the singular set $S$  it is {equivalent} to the distance to that set.  Clearly, if $\mathcal{O}$ is a polygon in $\real^2$ or a  polyhedral domain in $\real^3$, then  the  singular set  $S$ consists of the vertices of the polygon or the vertices and edges of the polyhedra, respectively. If   $\mathcal{O}$   is bounded, the smooth distance $\varrho$ can be replaced by the usual (Lipschitz-continuous) distance $\rho(x)=\inf_{y\in S}|x-y|$. 

It follows directly from \eqref{Kondratiev-1} that the scale of Kondratiev spaces is monotone in $m$ and $a$, i.e., 
\beq\label{kondratiev-emb}
\V^m_{p,a}(\mathcal{O})\hookrightarrow \V^{m'}_{p,a}(\mathcal{O})\quad \text{and}\quad \V^m_{p,a}(\mathcal{O})\hookrightarrow \V^m_{p,a'}(\mathcal{O}),
\eeq 
if $m'<m$ and $a'<a$.

Moreover, we transfer the above concept to functions additionally depending on the time  $t\in [0,T]$:  We define Kondratiev type spaces,    denoted by $L_q((0,T),\V^m_{p,a}(\mathcal{O}))$, which   contain all functions $u(x,t)$ such that 
\begin{align}
\|u|&L_q((0,T), \V^m_{p,a}(\mathcal{O}))\|\notag\\
&:=\left(\int_{(0,T)}\left(\sum_{|\alpha|\leq m}\int_{\mathcal{O}} |\varrho(x)|^{p(|\alpha |-a)}|D^{\alpha}_x u(x,t)|^p\ud x\right)^{q/p}\ud t\right)^{1/q}<\infty, \label{Kondratiev-3}
\end{align}
with $0<q\leq \infty$ and  parameters $a,p,m$  as above.

\paragraph{Kondratiev spaces on domains of polyhedral type}
\label{domains}

For our analysis we make use of several properties of Kondratiev spaces that have been proved in \cite{DHSS20}. 
Therefore,   in our later considerations, we will mainly be interested in the case that   $\mathcal{O}$ is a bounded domain of polyhedral type.

The precise definition below is taken from  Maz'ya and Rossmann \cite[Def.~4.1.1]{MR10}.

\begin{definition}\label{standard}
A bounded domain $D\subset \real^3$ is defined to be  of polyhedral type if the following holds: \\
\begin{figure}[H]
\begin{minipage}{0.55\textwidth}
\begin{itemize}
\item[(a)] {\em The boundary $\partial D$ consists of smooth (of class $C^{\infty}$) open two-dimensional manifolds $\Gamma_j$ (the faces of $D$), $j=1,\ldots, n$, smooth curves $M_k$ (the edges), $k=1,\ldots, l$, and vertices $x^{(1)}, \ldots, x^{(l')}$. }
\item[(b)] {\em For every $\xi\in M_k$ there exists a neighbourhood $U_{\xi}$ and a $C^{\infty}$-diffeomorphism  $\kappa_{\xi}$ which maps $D\cap U_{\xi}$ onto $\mathcal{D}_{\xi}\cap B_1(0)$, where $\mathcal{D}_{\xi}\subset \real^3$ is a dihedron, which in polar coordinates can be described as }
 \end{itemize}
\end{minipage}\hfill 
\begin{minipage}{0.25\textwidth}
\includegraphics[width=4cm]{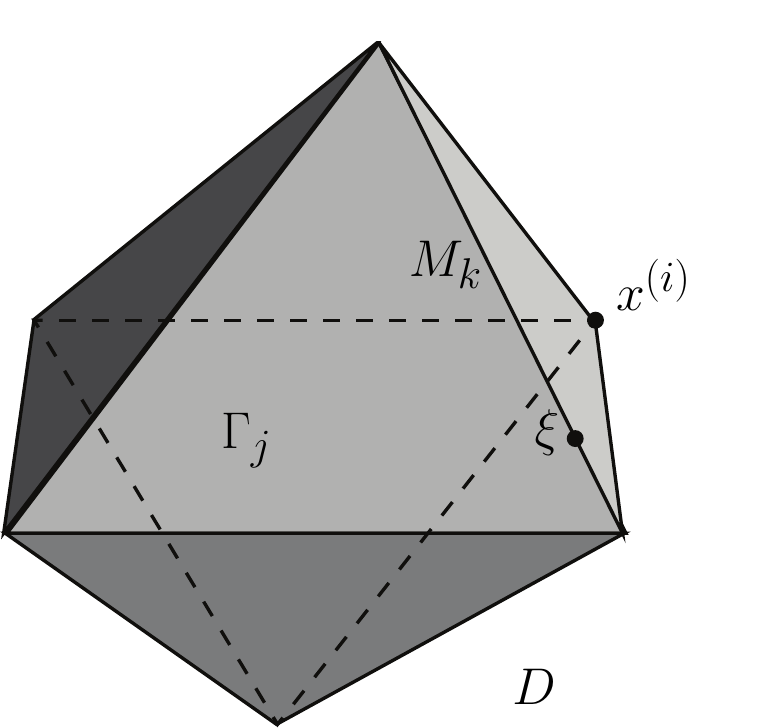}			
\caption[Polyhedron]{Polyhedron}
\end{minipage}\\
\begin{itemize}
    \item[] 
\[
\mathcal{D}_{\xi}=K\times \real, \qquad K=\{(x_1,x_2): \ 0<r<\infty, \ -\theta/2<\varphi<\theta/2\},
\]
{\em where the opening angle $\theta$ of the 2-dimensional wedge $K$ satisfies $0<\theta\leq 2\pi$. }
\item[(c)] {\em For every vertex $x^{(i)}$ there exists a neighbourhood $U_i$ and a diffeomorphism $\kappa_i$ mapping $D\cap U_i$
 onto $K_i\cap B_1(0)$, where $K_i$ is a polyhdral cone with edges and vertex at the origin. }
 \end{itemize}
\end{figure}

\end{definition}

\begin{rem}\label{rem-def-domain} \label{notation}
In the literature many different types of polyhedral domains are considered. A more general version which coincides with the above definition if  $d=3$ is discussed in \cite{DHSS20}.
Further variants of polyhedral domains can be found in 
Babu\v{s}ka,  Guo \cite{BG97},  Bacuta, Mazzucato, Nistor, Zikatanov \cite{BMNZ} 
and Mazzucato, Nistor \cite{NistorMazzucato}.
\end{rem}

\smallskip

Concerning pointwise multiplication  the following result can be found in \cite{DHSS20}.

\begin{corollary}\label{thm-pointwise-mult-2}
Let $\frac{d}{2}<p<\infty$, $m\in \nat_0$, and $a\geq \frac{d}{p}-2$. Then there exists a constant $c$ such that 
\[
\|uv| \mathcal{K}^{m}_{a,p}(D)\|\leq c\|u|\mathcal{K}^{m+2}_{a+2,p}(D)\|\cdot \|v|\mathcal{K}^{m}_{a,p}(D)\|
\]
holds for all $u\in \mathcal{K}^{m+2}_{a+2,p}(D)$ and $v\in \mathcal{K}^{m}_{a,p}(D)$.
\end{corollary}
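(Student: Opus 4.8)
The statement is a pointwise multiplication result for Kondratiev spaces, so the natural strategy is to reduce the estimate to a localized version near the singular set $S$ and a harmless part away from $S$, then to exploit that away from $S$ the Kondratiev spaces coincide with ordinary Sobolev spaces, for which Sobolev-type multiplication theorems (using $p>d/2$, hence $\mathcal{K}^{m+2}_{a+2,p}\hookrightarrow L_\infty$ locally when combined with enough derivatives) are classical. The core of the argument is therefore the behaviour near $S$, where the weight $\varrho(x)$ degenerates. There one uses the standard dyadic decomposition of a neighbourhood of $S$ into annular regions $\varrho(x)\sim 2^{-j}$ (for edges and vertices this is the usual Kondratiev-type cone/dihedron decomposition), rescales each piece to a reference domain of unit size, applies the unweighted Leibniz/multiplication estimate there, and sums the rescaled contributions back up with the correct powers of $2^{-j}$ coming from the weight exponents $p(|\alpha|-a)$.

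Concretely, I would proceed as follows. First, fix a smooth partition of unity subordinate to a covering of $\overline D$ by the neighbourhoods $U_\xi$, $U_i$ from Definition \ref{standard} together with one interior patch staying away from $S$; by the diffeomorphism invariance of Kondratiev spaces (proved in \cite{DHSS20}) it suffices to prove the estimate on a dihedron $\mathcal{D}_\xi$ and on a polyhedral cone $K_i$, and on the interior patch. Second, on each such model piece perform the dyadic decomposition with respect to the distance to $S$: write $1=\sum_j \eta_j$ with $\supp\eta_j$ contained in $\{c2^{-j-1}\le\varrho\le c2^{-j+1}\}$ and $|D^\alpha\eta_j|\lesssim 2^{j|\alpha|}$. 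Third, on each annulus apply the rescaling $x\mapsto 2^{-j}x$ which maps the annulus to a fixed reference set; there the weight is $\sim 1$, so $\|u|\mathcal{K}^m_{a,p}\|$ restricted to the annulus becomes, up to the scaling Jacobian and powers of $2^j$, the ordinary $W^m_p$-norm on the reference set. On the reference set the pointwise multiplication estimate $\|uv|W^m_p\|\lesssim\|u|W^{m+2}_p\|\,\|v|W^m_p\|$ holds because $W^{m+2}_p\hookrightarrow L_\infty$ and the Leibniz rule combined with Hölder distributes the remaining derivatives — here the hypothesis $p>d/2$ and the two extra derivatives on $u$ are exactly what guarantees the needed embedding on a $d$-dimensional reference domain.

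Fourth, and this is the only place where the precise value $a\ge d/p-2$ enters, I would carefully track the powers of $2^j$ produced by the three sources: the weight $\varrho^{p(|\alpha|-a)}\sim 2^{-jp(|\alpha|-a)}$ on the annulus, the Jacobian $2^{-jd}$ from the change of variables, and the factor $2^{j\cdot 2}$ that appears because $u$ is measured in $\mathcal{K}^{m+2}_{a+2,p}$ rather than $\mathcal{K}^{m}_{a,p}$ — the shift $a\mapsto a+2$ and $m\mapsto m+2$ being tailored precisely so that the exponents match. One checks that the net exponent of $2^j$ in each summand is non-positive (in fact zero, or controlled), so that after summing over $j$ the geometric series converges and one recovers $\|u|\mathcal{K}^{m+2}_{a+2,p}\|\cdot\|v|\mathcal{K}^m_{a,p}\|$; the condition $a\ge d/p-2$ is exactly the borderline making the relevant exponent have the right sign. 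Since this is quoted from \cite{DHSS20}, I would in practice simply cite the multiplier theorem established there and indicate that it follows by interpolating/combining the unweighted multiplication theorem with the dyadic-rescaling technique; the main obstacle, were one to write it in full, is the bookkeeping of the $2^j$-powers and verifying that the interior (non-degenerate) patches contribute nothing worse, which is routine but delicate.
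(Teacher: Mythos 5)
The paper itself offers no proof of this corollary: it is imported verbatim from the cited work \cite{DHSS20} on properties of Kondratiev spaces, so there is no in-paper argument to compare against. Your sketch is nevertheless a faithful reconstruction of the standard localization--dilation proof used for such results, and the key accounting is right: on a dyadic annulus $\{\varrho\sim 2^{-j}\}$ the rescaled norms satisfy $\|w\|^p_{\mathcal{K}^{m}_{a,p}(\mathrm{ann}_j)}\sim 2^{j(pa-d)}\|w_j\|^p_{W^m_p(\mathrm{ref})}$, the unweighted estimate $\|u_jv_j\|_{W^m_p}\lesssim\|u_j\|_{W^{m+2}_p}\|v_j\|_{W^m_p}$ holds on the reference set because $m+2>d/p$ (guaranteed by $p>d/2$), and the net power of $2^{-j}$ in each summand is $pa+2p-d$, which is nonnegative precisely when $a\ge d/p-2$; bounding one factor by its supremum over $j$ and summing the other then closes the argument. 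Two points deserve more care than your sketch gives them. First, near a vertex of a polyhedral type domain the singular set contains the adjacent edges as well, so after rescaling the shell $|x|\sim 2^{-j}$ to a fixed spherical shell the weight still degenerates toward the edges; one needs a second, angular dyadic decomposition there (or an induction on the strata of $S$), which is exactly the technical core of \cite{DHSS20} and not quite ``the usual cone decomposition'' in one step. Second, one must check that multiplication by the cutoffs $\eta_j$ is uniformly bounded on the Kondratiev scale, which follows from $\varrho^{|\alpha|}|D^\alpha\eta_j|\lesssim 1$ but should be stated. Neither issue is a gap in the strategy, only in the bookkeeping you yourself flag as delicate; citing \cite{DHSS20}, as the paper does, is the intended resolution.
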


\begin{rem}
In order to avoid any confusion we mention that in the proof of Theorem \ref{Sob-reg-3} (equation \eqref{est-3}) below, we  use the weaker estimate 
\[
\|uv| \mathcal{K}^{m}_{a,p}(D)\|\leq c\|u|\mathcal{K}^{m+2}_{a+2,p}(D)\|\cdot \|v|\mathcal{K}^{m+2}_{a+2,p}(D)\|. 
\]
\end{rem}

\paragraph{Besov spaces}

Due to the different contexts Besov spaces arose from they can be defined/characterized in several ways, e.g. via higher order differences, via the Fourier-analytic approach or via decompositions with suitable building blocks, cf. \cite{Tri83, Tri08} and the references therein.  Here we use the approach via higher order differences as can be found in \cite[Sect.~2.5.12]{Tri83}.   
Let $f$
be an arbitrary function on $\real^d$, $h\in\real^d$ and $r\in\nat$, then 
\[
(\Delta_h^1 f)(x)=f(x+h)-f(x) \quad \text{and} \quad
(\Delta_h^{r+1} f)(x)=\Delta_h^1(\Delta_h^r f)(x)
\]
are the usual iterated differences. 
Given a function $f\in L_p(\real^d)$ the {\em $r$-th modulus of smoothness} is defined
by
\[
\omega_r(f,t)_p:=\sup_{|h|\leq t} \|\Delta_h^r f\mid L_p(\real^d)\|, \quad
t>0. 
\]

Then the {\em Besov
space ${B}^s_{p,q}(\real^d)$} contains all $f\in L_p(\real^d)$ such that\label{besov-3}
\[
\|f|{B}^s_{p,q}(\real^d)\| := \|f|L_p(\real^d)\| + \left(\int_0^1 t^{-sq} \omega_r(f,t)_p^q
\ \frac{\ud t}{t}\right)^{1/q}<\infty, 
\]

where  $0<p,q\leq \infty$, $s>0$, and $r\in\nat$ such that $r>s$. 
This definition   is independent of $r$, meaning that different values
of $r>s$ result in quasi-norms which are equivalent.  Corresponding function spaces on domains $\mathcal{O}\subset \real^d$ can be introduced via restriction, i.e., 
\begin{eqnarray*}
B^s_{p,q}(\mathcal{O})&:=& \left\{f\in L_p(\mathcal{O}): \ \exists g\in B^s_{p,q}(\real^d), \ g\big|_{\mathcal{O}}=f \right\},\\
\|f|B^s_{p,q}(\mathcal{O})\|&:=& \inf_{g|_{\mathcal{O}}=f}\|f|B^s_{p,q}(\real^d)\|. 
\end{eqnarray*}

Our main tool when investigating the Besov regularity of  solutions to the PDEs will be the following embedding result  between  Kondratiev  and Besov spaces  adapted to our needs,   which is an extension of  \cite[Thm.~1]{Han15}. For a  proof we refer to \cite[Thms.~1.4.12, 1.4.14]{co-habil}.

\begin{theorem}[{\bf Embeddings between Kondratiev and Besov spaces}]\label{thm-hansen-gen}
Let $D\subset \real^3$ be some  polyhedral type domain and assume $k,m,\gamma\in \nat_0$. Furthermore, let $1<p<\infty$, $s, a\in \real$, and  suppose  $0\leq \gamma<\min(m,\frac{3}{\delta}s)$ and $a>\frac{\delta}{3}\gamma$, where $\delta$ denotes the dimension of the singular set (i.e., $\delta=0$ if there are only vertex singularities and $\delta=1$ if there are edge and vertex singularities). Then we have the continuous embedding
\begin{equation}\label{emb-hansen-gen-sob}
H^k([0,T],\calk^{m}_{p,a}(D))\cap H^k([0,T],B^s_{p,p}(D))\hookrightarrow H^k([0,T],B^{\gamma}_{\tau,\tau}(D)), \qquad \text{where}\quad \frac{1}{\tau}=\frac {\gamma}{3}+\frac 1p. 
\end{equation}
\end{theorem}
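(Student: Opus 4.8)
The plan is to reduce the statement to the known stationary embedding from \cite[Thm.~1]{Han15} (in the generalized form of \cite[Thms.~1.4.12, 1.4.14]{co-habil}) applied pointwise in $t$, and then to integrate in time, using that time differentiation commutes with the spatial embedding. First I would record the stationary version: under the hypotheses $0\leq\gamma<\min(m,\tfrac{3}{\delta}s)$ and $a>\tfrac{\delta}{3}\gamma$, one has
\[
\calk^m_{p,a}(D)\cap B^s_{p,p}(D)\hookrightarrow B^\gamma_{\tau,\tau}(D),\qquad \frac1\tau=\frac\gamma3+\frac1p,
\]
with a norm estimate $\|f|B^\gamma_{\tau,\tau}(D)\|\lesssim \|f|\calk^m_{p,a}(D)\|^{\theta}\,\|f|B^s_{p,p}(D)\|^{1-\theta}$ for a suitable interpolation exponent $\theta=\theta(\gamma,s)\in(0,1)$ (this is the form in which Hansen's result is actually proved, via a wavelet/atomic decomposition balancing the weighted smoothness against the $B^s_{p,p}$-smoothness; see the references cited).

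Next I would transfer this to the Banach-space-valued setting. For $u$ in the space on the left of \eqref{emb-hansen-gen-sob}, the time derivatives $\partial_{t^j}u$, $0\le j\le k$, lie in $L_2([0,T],\calk^m_{p,a}(D))\cap L_2([0,T],B^s_{p,p}(D))$, and for almost every $t$ the section $\partial_{t^j}u(t,\cdot)$ lies in $\calk^m_{p,a}(D)\cap B^s_{p,p}(D)$, hence in $B^\gamma_{\tau,\tau}(D)$ by the stationary embedding. Applying the norm estimate and then Hölder's inequality in $t$ (with exponents $1/\theta$ and $1/(1-\theta)$ against the $L_2$-norms of the two factors — note $2\theta$ and $2(1-\theta)$ are handled by interpolating $L_2 \cap L_2$, or more simply one uses $\|f|B^\gamma_{\tau,\tau}\|\lesssim \|f|\calk^m_{p,a}\|+\|f|B^s_{p,p}\|$ which already follows from the product bound and Young's inequality, losing nothing since we only need an embedding) yields
\[
\int_0^T\|\partial_{t^j}u(t)|B^\gamma_{\tau,\tau}(D)\|^2\,\ud t\ \lesssim\ \int_0^T\!\big(\|\partial_{t^j}u(t)|\calk^m_{p,a}(D)\|^2+\|\partial_{t^j}u(t)|B^s_{p,p}(D)\|^2\big)\,\ud t,
\]
and summing over $j=0,\dots,k$ gives exactly the asserted bound $\|u|H^k([0,T],B^\gamma_{\tau,\tau}(D))\|\lesssim \|u|H^k([0,T],\calk^m_{p,a}(D))\|+\|u|H^k([0,T],B^s_{p,p}(D))\|$. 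One must also check that $\partial_{t^j}$ commutes with the embedding in the sense of distributions on $(0,T)$, i.e.\ that the $B^\gamma_{\tau,\tau}(D)$-valued function obtained really has the $B^\gamma_{\tau,\tau}(D)$-valued weak time derivatives claimed; this follows because the spatial embedding is a bounded linear map, so it commutes with $\int_0^T(\cdot)\psi(t)\,\ud t$ for test functions $\psi$.

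The main obstacle is not conceptual but bookkeeping: one must make sure the hypotheses of the stationary Hansen-type embedding are met with the \emph{same} parameters $\gamma, m, a, s, p$ throughout, and that the interpolation exponent $\theta$ arising there is independent of $t$ (it is, since it depends only on $\gamma$ and $s$), so that the Hölder/Young step in $t$ is uniform. A secondary point is that \eqref{emb-hansen-gen-sob} as written lists $H^k([0,T],\calk^m_{p,a}(D))\cap H^k([0,T],B^s_{p,p}(D))$ rather than the single space $H^k([0,T],\calk^m_{p,a}(D)\cap B^s_{p,p}(D))$; these coincide as sets with equivalent norms because the intersection of two Sobolev spaces with the same time-integrability and smoothness order over the same interval is again such a Sobolev space (the norm on the left of \eqref{emb-hansen-gen-sob} is by definition the sum of the two), so no extra argument is needed there.
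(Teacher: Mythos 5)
Your proposal is correct and matches the route the paper relies on: the theorem is exactly the stationary Kondratiev--Besov embedding of Hansen (in the extended form of \cite[Thms.~1.4.12, 1.4.14]{co-habil}) applied fiberwise to the time derivatives $\partial_{t^j}u(t,\cdot)$, with the trivial observation that a bounded linear spatial embedding commutes with weak time differentiation and with integration in $t$. The paper itself only cites the references for the proof, so no further comparison is needed.
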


\remark{
Note that for the adaptivity scale of Besov spaces $B^{\gamma}_{\tau, \tau}(D)$ appearing in Theorem \ref{thm-hansen-gen}, 
from  the restriction on the parameters $\frac{1}{\tau}=\frac{\gamma}{3}+\frac 1p$ together with  $\tau\leq p$ and  $p> 1$, we deduce
\[
\gamma=3\left(\frac{1}{\tau}-\frac 1p\right)\geq 3\left(\frac{1}{\tau}-1\right)_+. 
\] 
In particular,  for this range of parameters it is well known that the different approaches towards Besov spaces (such as  the Fourier-analytic approach as well as decompositions via wavelets)  actually coincide. Therefore, the Besov spaces on domains (as before the   Kondratiev spaces) may  be considered in the setting of distributions, i.e., as subsets of $\mathcal{D}'(\mathcal{O})$, and  may  contain 'functions' which take complex values. However, when considering the fundamental parabolic problems, we restrict ourselves to the  real-valued setting: We assume the  coefficients of the differential operator $L$ to be real-valued as well as the right-hand side $f$, therefore, the solutions are real-valued as well. 
}

\section{Parabolic PDEs and operator pencils}

In this section we introduce the basic linear and nonlinear parabolic problems we will be concerned with in the sequel. Moreover, in order to state and prove our main results in Section \ref{sect-reg-sob-kon}, a short discussion of operator pencils is necessary.

\subsection{The fundamental parabolic problems}

Let  $D$  denote some domain of polyhedral type in $\real^d$ according to Definition \ref{standard} with faces $\Gamma_j$, $j=1,\ldots, n$.  
For $0<T<\infty$ put $D_T=(0,T]\times D$ and 
$ 
\Gamma_{j,T}=[0,T]\times \Gamma_j$.  \\

We will use the regularity results obtained in \cite{DS20} of the following linear  parabolic problem.

\begin{problems}[{\bf Linear parabolic problem in divergence form}]\label{prob_parab-1a}
Let $m\in \nat$. We consider the following first initial-boundary value problem 

\begin{equation} \label{parab-1a}
\left\{\begin{array}{rl}
\partt u+{L(x,D_x)}u\ =\ f \, &  \text{ in } D_T, \\
\frac{\partial^{k-1}u}{\partial \nu^{k-1}}\Big|_{\Gamma_{j,T}}\ =\ 0, & \   k=1,\ldots, m, \ j=1,\ldots, n,\\ 
u\big|_{t=0}\ =\ 0 \, & \text{ in } D.
\end{array} \right\}
\end{equation}
\end{problems}

Here $f$ is a function given on $D_T$, $\nu$ denotes the exterior normal to $\Gamma_{j}$, and  the partial differential operator $L$ is given by
\[{L(x,D_x)}=\sum_{|\alpha|, |\beta|=0}^m (-1)^{|\alpha|} D^{\alpha}_x({a_{\alpha \beta}(x)}D^{\beta}_x),\]
where $a_{\alpha \beta}$ are bounded real-valued functions from $C^{\infty}(D_T)$ with  $a_{\alpha \beta}={a}_{\beta \alpha}$ constant in $t$. 
Furthermore, the operator $L$ is assumed to be uniformly elliptic, i.e.,  
\begin{equation}\label{operator_L}
\sum_{|\alpha|, |\beta|=m}a_{\alpha \beta}\xi^{\alpha}\xi^{\beta}\geq c|\xi|^{2m} \qquad {\text{for all}}\quad  x\in D , \quad \xi\in \mathbb{R}^d.
\end{equation}

 Let us denote by 
\begin{equation}
B(t,u,v)=\int_D \sum_{|\alpha|, |\beta|=0}^m a_{\alpha\beta}(x)(D^{\beta}_xu) (D^{\alpha}_xv)\ud x
\end{equation}
the (time-dependent) bilinear form.

\begin{rem}[{\bf Assumptions on the time-dependent bilinear form}]\label{rem-B-coercive}
When dealing with parabolic problems  we can assume w.l.o.g.  that $B(t,\cdot, \cdot)$ satisfies 
\begin{equation}\label{B-coercive}
B(t,u,u)\geq \mu \|u|H^m(D)\|^2
\end{equation}
for all $u\in \mathring{H}^m(D)$ and a.e. $t\in [0,T]$. 
We refer to \cite[Rem.~2.3.5]{co-habil} for a detailed discussion. 
\end{rem}

It is our intention to also  study nonlinear  versions of Problem \ref{prob_parab-1a}. Therefore, we  modify \eqref{parab-1a} as  follows. 

\begin{problems}[{\bf Nonlinear parabolic problem in divergence form}]\label{prob_nonlin}
Let $m,M\in \nat$  and $\varepsilon>0$. We consider the following nonlinear parabolic problem 
\begin{equation} \label{parab-nonlin-1}
\left\{\begin{array}{rl}
\frac{\partial }{\partial t}u+L(x,D_x)u +\varepsilon u^{M}\ =\ f \, &  \text{ in } D_T, \\
\frac{\partial^{k-1}u}{\partial \nu^{k-1}}\Big|_{\Gamma_{j,T}}\ =\ 0, & \   k=1,\ldots, m, \ j=1,\ldots, n,\\ 
u\big|_{t=0}\ =\ 0 \, & \text{ in } D. 
\end{array} \right\}
\end{equation}
\end{problems}
The assumptions on $f$ and  the operator $L$ are as in Problem \ref{prob_parab-1a}. When we establish Besov regularity results for Problem \ref{prob_nonlin} we interpret  \eqref{parab-nonlin-1} as a fixed point problem and  show that the regularity estimates for Problem \ref{prob_parab-1a}   carry over to Problem \ref{prob_nonlin}, provided that $\varepsilon$ is sufficiently small.

\subsection{Operator pencils} 
\label{subsect-op-pen}

In order to correctly state the global regularity results in Kondratiev spaces for Problems \ref{prob_parab-1a} and \ref{prob_nonlin},   we need to work with operator pencils generated by  the corresponding elliptic problems in the polyhedral type domain ${D}\subset \real^3$.

We briefly recall the basic facts needed in the sequel. For further information  on this subject we refer to  \cite{KMR01} and  \cite[Sect.~2.3, 3.2., 4.1]{MR10}. On a domain 
 $D\subset\real^3$  of polyhedral type according to Definition \ref{standard} we consider the problem 
\begin{equation} 
\left\{\begin{array}{rl}\label{prob-ell-00}
Lu\ =\ f \, & \  \text{in} \quad D, \\
\frac{\partial^{k-1}u}{\partial \nu^{k-1}}\Big|_{\partial D}\ =\ 0, & \   k=1,\ldots, m.   
\end{array} \right\} 
\end{equation}

The singular set $S$ of $D$ then is given by the boundary points   $M_1\cup \ldots \cup M_l\cup \{x^{(1)}, \ldots, x^{(l')}\}$. We do not exclude the cases $l=0$ (corner domain) and $l'=0$ (edge domain). In the last case, the set $S$ consists only of smooth non-intersecting  edges. Figure  \ref{corner-edge-dom}  gives examples of polyhedral domains without edges or corners, respectively. \index{domain! corner domain}\index{domain! edge domain}\\  

\begin{figure}[H]
\begin{minipage}{\textwidth}
\begin{center}
\includegraphics[width=8cm]{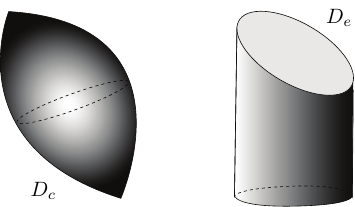}		
\caption[Corner and edge domain]{Corner domain $D_c$ ($l=0$) and edge domain $D_e$ ($l'=0$)}
\label{corner-edge-dom}
\end{center}
\end{minipage}\\
\end{figure}

The elliptic boundary value problem \eqref{prob-ell-00} on $D$ generates two types of operator pencils for the edges $M_k$ and for the vertices $x^{(i)}$ of the domain, respectively.

\begin{figure}[H]
\begin{minipage}{0.5\textwidth}
{\bf 1) Operator pencil $A_{\xi}(\lambda)$ for edge points:} \\
The pencils $A_{\xi}(\lambda)$ for  edge points $\xi\in M_k$ are defined as follows: According to Definition \ref{standard} there exists a neighborhood $U_{\xi}$ of $\xi$ and a diffeomorphism $\kappa_{\xi}$  mapping $D\cap U_{\xi}$ onto $\mathcal{D}_{\xi} \cap B_1(0)$, where $\mathcal{D}_{\xi}$ is a  dihedron. \\
Let $\Gamma_{k_{\pm}}$ be the faces adjacent to $M_k$. Then by $\mathcal{D}_{\xi}$ we denote the dihedron which is bounded by the half-planes $\mathring{\Gamma}_{k_{\pm}}$ tangent to $\Gamma_{k_{\pm}}$ at $\xi$ and the edge $M_{\xi}=\mathring{\Gamma}_{k_{+}}\cap \mathring{\Gamma}_{k_{-}}$. Furthermore, let $r,\varphi$ be polar coordinates in the plane perpendicular to $M_{\xi}$ such that 
\[
\mathring{\Gamma}_{k_{\pm}}=\left\{x\in \real^3: \ r>0, \ \varphi=\pm \frac{\theta_{\xi}}{2}\right\}.
\]
\end{minipage}\hfill \begin{minipage}{0.4\textwidth}
\includegraphics[width=6cm]{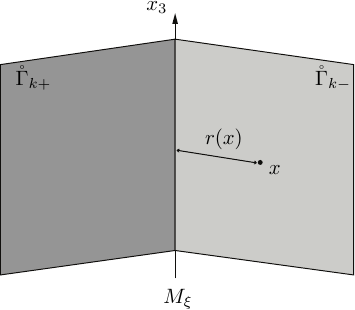}		
\caption[Dihedron $\mathcal{D}_{\xi}$]{Dihedron $\mathcal{D}_{\xi}$}
\end{minipage}\\
\end{figure}
We define the {\em operator pencil} $A_{\xi}(\lambda)$ as  follows:   \index{operator pencil! $A_{\xi}(\lambda)$}
\beq\label{op-pencil-1}
A_{\xi}(\lambda)U(\varphi)=r^{2m-\lambda}L_{0}(0,D_x)u, 
\eeq
where $u(x)=r^{\lambda}U(\varphi)$,  $\lambda\in \mathbb{C}$,  {$U$ is a function on $I_{\xi}:=\left(\frac{-\theta_{\xi}}{2}, \frac{\theta_{\xi}}{2}\right)$,}  and 
\[
L_{0}(\xi,D_x)=\sum_{|\alpha|=|\beta|=m}(-1)^{m}D^{\alpha}_x(a_{\alpha\beta}(\xi)D_x^{\beta}) 
\] 
denotes the main part of the differential operator $L(x,D_x)$ with coefficients frozen at $\xi$. 
This way we obtain in \eqref{op-pencil-1} a boundary value problem for the function $U$ on the 1-dimensional subdomain $I_{\xi}$ with the complex parameter $\lambda$.  Obviously,  $A_{\xi}(\lambda)$  is a polynomial of degree $2m$ in $\lambda$.

The operator $A_{\xi}(\lambda)$ realizes a continuous mapping 
\[
H^{2m}(I_{\xi})\rightarrow L_2(I_{\xi}),
\]
for every $\lambda\in \mathbb{C}$. 
Furthermore, $A_{\xi}(\lambda)$ is an isomorphism for all $\lambda\in \mathbb{C}$ with the possible exception of a countable set of isolated points, the {\em spectrum of  $A_{\xi}(\lambda)$}, \index{operator pencil! spectrum} which  consists of its eigenvalues with finite algebraic multiplicities: Here a  complex number $\lambda_0$ is called an {\em eigenvalue of the pencil $A_{\xi}(\lambda)$}\index{operator pencil! eigenvalue} if there exists  a nonzero function $U\in H^{2m}(I_{\xi})$ such that $A_{\xi}(\lambda_0)U=0$.
 It is known that the {\em 'energy line'} $\mathrm{Re}\lambda=m-1$  does not contain eigenvalues of the pencil $A_{\xi}(\lambda)$. We denote by {$\delta_{\pm}^{(\xi)}$} the largest positive real numbers such that the strip 
\beq\label{delta_k_op-1}
m-1-\delta_{-}^{(\xi)}<\mathrm{Re}\lambda<m-1+\delta_{+}^{(\xi)}
\eeq
is free of eigenvalues of the pencil $A_{\xi}(\lambda)$. Furthermore, we put 
\beq\label{delta_k_op}
{\delta_{\pm}^{(k)}}=\inf_{\xi\in M_k}{\delta_{\pm}^{(\xi)}}, \qquad k=1,\ldots, l. 
\eeq 

For example, concerning the Dirichlet problem for the Poisson equation 
on a domain $D\subset \real^3$  of polyhedral type, the eigenvalues of the pencil $A_{\xi}(\lambda)$  are given by 
\[
\lambda_k=k\pi/\theta_{\xi}, \qquad k=\pm1, \pm2, \ldots, 
\]
where   $\theta_{\xi}$ is the inner angle at the edge point $\xi$, cf. \cite[Ex. 2.5.2]{co-habil}. 
Therefore, the first positiv eigenvalue is $\lambda_1=\frac{\pi}{\theta_{\xi}}$ and we obtain $\delta_{\pm}=\frac{\pi}{\theta_{\xi}}$, cf. \cite[Ex. 2.5.1]{co-habil}.  \\

{\bf 2) Operator pencil $\mathfrak{A}_i(\lambda)$ for corner points:} \\ Let $x^{(i)}$ be a vertex of $D$. According to Definition \ref{standard} there exists a neighborhood $U_i$ of $x^{(i)}$ and a diffeomorphism $\kappa_i$  mapping $D\cap U_i$ onto $K_i\cap B_1(0)$ , where 
\[
K_i=\{x\in \real^3: \ x/|x|\in \Omega_i\}
\]
is a polyhedral cone with edges and vertex at the origin. W.l.o.g. we may assume that the Jacobian matrix $\kappa_i'(x)$ is equal to the identity matrix at the point $x^{(i)}$. We introduce spherical coordinates  $\rho=|x|$, $\omega=\frac{x}{|x|}$ in $K_i$ and define the operator pencil \index{operator pencil! $\mathfrak{A}_{i}(\lambda)$}
\begin{equation}\label{op-pencil}
\mathfrak{A}_i(\lambda)U(\omega)=\rho^{2m-\lambda}L_{0}(x^{(i)},D_x)u,
\end{equation}
where $u(x)=\rho^{\lambda}U(\omega)$ and $U\in \mathring{H}^{m}(\Omega_i)$ is a function on $\Omega_i$. An {\em eigenvalue of  $\mathfrak{A}_i(\lambda)$}\index{operator pencil! eigenvalue} is a complex number $\lambda_0$ such that 
$\mathfrak{A}_i(\lambda_0)U=0$ for some nonzero function $U\in\mathring{H}^{m}(\Omega_i)$. 
The operator $\mathfrak{A}_i(\lambda)$ realizes a continuous mapping 
\[
\mathring{H}^{m}(\Omega_i)\rightarrow H^{-m}(\Omega_i).
\]
Furthermore, it is  known that  $\mathfrak{A}_i(\lambda)$ is an isomorphism for all $\lambda\in \mathbb{C}$ with the possible exception of a countable set of isolated points. The mentioned countable set consists of eigenvalues with finite algebraic multiplicities. 
\begin{figure}[H]
\begin{minipage}{0.4\textwidth}
Moreover, the eigenvalues of $\mathfrak{A}_i(\lambda)$ are situated, except for finitely many, outside a double sector $|\mathrm{Re}\lambda|<\varepsilon |\mathrm{Im}\lambda|$ containing the imaginary axis, cf. \cite[Thm. 10.1.1]{KMR01}. In Figure \ref{eigenvalues-pencil} the situation is illustrated: Outside the double sector (yellow area) there are only finitely many eigenvalues of the operator pencil $\mathfrak{A}_i(\lambda)$.   \\
Dealing with regularity properties of solutions, we look for the widest strip in the $\lambda$-plane, free of eigenvalues and containing the {\em 'energy line'}  $ \mathrm{Re}\lambda=m-3/2,$ 
cf. Assumption \ref{assumptions}. From what was outlined above, information on the width of this strip is obtained from lower estimates for real parts of the eigenvalues situated over the energy line.   \\
\end{minipage}\hfill \begin{minipage}{0.5\textwidth}
\includegraphics[width=15cm]{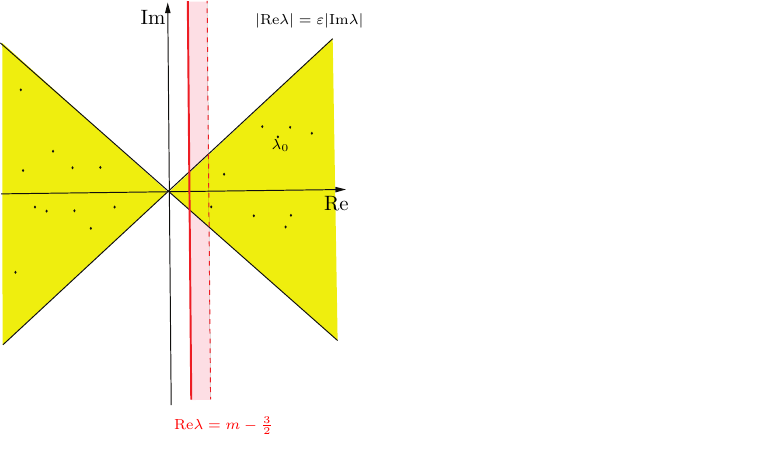}		
\caption[Eigenvalues of pencil $\mathfrak{A}_i(\lambda)$]{Eigenvalues of operator pencil $\mathfrak{A}_i(\lambda)$}
\label{eigenvalues-pencil}
\end{minipage}
\end{figure}

\begin{rem}[{\bf Operator pencils for parabolic problems}]  
Since  we study parabolic PDEs, where the differential operator $L(t,x,D_x)$ additionally depends on the time $t$, we have to work with operator pencils $A_{\xi}(\lambda,t)$ and $\mathfrak{A}_{i}(\lambda,t)$ in this context. The philosophy  is to fix $t\in [0,T]$ and define the pencils as above: We replace  \eqref{op-pencil-1} by 
\[
A_{\xi}(\lambda,t)U(\varphi)=r^{2m-\lambda}L_{0}(t,0,D_x)u, 
\]
and work with $\delta^{(\xi)}_{\pm}(t)$ and $\delta_{\pm}^{(k)}(t)=\inf_{\xi\in M_k}{\delta_{\pm}^{(\xi)}}(t)$ in \eqref{delta_k_op-1} and \eqref{delta_k_op}, respectively. Moreover, we put  
\beq\label{delta_k_op_t}
{\delta_{\pm}^{(k)}}=\inf_{t\in [0,T]}{\delta_{\pm}^{(k)}}(t), \qquad k=1,\ldots, l. 
\eeq 
Similar for $\mathfrak{A}_{i}(\lambda,t)$, where now \eqref{op-pencil} is replaced by 
\begin{equation}
\mathfrak{A}_i(\lambda,t)U(\omega)=\rho^{2m-\lambda}L_{0}(t,x^{(i)},D_x)u. 
\end{equation}
\end{rem}

\section{Regularity results in Sobolev and Kondratiev spaces}

\label{sect-reg-sob-kon}

This section presents regularity results for Problem \ref{prob_nonlin} in Sobolev and Kondratiev spaces based on the corresponding findings for Problem  \ref{prob_parab-1a} in \cite{DS20}. They will form the basis for obtaining regularity results in Besov  spaces later on via suitable embeddings.\\

\subsection{Regularity results in Sobolev and Kondratiev spaces for Problem I}
\label{subsect-sob-reg}

\begin{theorem}[{\bf Sobolev regularity with time derivatives}]\label{Sob-reg-3}
Let $l\in \nat_0$ and assume that the right hand side $f$ of Problem \ref{prob_parab-1a} satisfies 
$$f\in H^l([0,T], H^{-m}(D))\qquad \text{and} \qquad 
\partial_{t^k}f(0,x)=0  \quad  \text{ for } \quad  k=0,\ldots, l-1.$$ 
Then the weak solution $u$ in the space ${H}^{1,m*}(D_T)=L_2([0,T],\mathring{H}^m(\Omega))\cap H^1([0,T],H^{-m}(\Omega))$ of Problem \ref{prob_parab-1a} in fact belongs to ${H}^{l+1,m*}(D_T)$, i.e., 
has derivatives with respect to $t$ up to order $l$ satisfying 
\[
\partial_{t^k}u\in {H}^{1,m*}(D_T)\quad \text{for}\quad k=0,\ldots, l,
\]
and 
\[
\sum_{k=0}^l\|\partial_{t^k}u|H^{1,m*}(D_T)\| \leq C\sum_{k=0}^l\|\partial_{t^k} f|L_2([0,T],H^{-m}(D))\|, 
\]
where $C$ is a constant independent of $u$ and $f$. 
\end{theorem}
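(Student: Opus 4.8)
The plan is to prove the statement by induction on $l$, treating the time derivatives one at a time. The base case $l=0$ is precisely the standard existence and energy estimate for the weak solution of the linear parabolic problem in $H^{m,1*}(D_T)$, which is classical (Galerkin approximation combined with the coercivity assumption \eqref{B-coercive}); this is the content of the $H^{m,1*}$-theory underlying Problem \ref{prob_parab-1a} and I would simply invoke it. For the inductive step, assume the assertion holds for $l-1$, so that $u$ has time derivatives up to order $l-1$ lying in $H^{m,1*}(D_T)$ with the corresponding estimate. First I would formally differentiate the equation $\partial_t u + (-1)^m L(t,x,D_x)u = f$ $l$ times with respect to $t$. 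Writing $v:=\partial_{t^l}u$ and using the Leibniz rule on $L(t,x,D_x)u=\sum_{|\alpha|,|\beta|\le m}D^\alpha_x(a_{\alpha\beta}(t,x)D^\beta_x u)$, one gets
\[
\partial_t v + (-1)^m L(t,x,D_x)v = \partial_{t^l}f - \sum_{j=1}^{l}\binom{l}{j}(-1)^m L_{\partial_{t^j}}(t,x,D_x)\,\partial_{t^{l-j}}u =: \widetilde f,
\]
where $L_{\partial_{t^j}}$ denotes the operator with coefficients $\partial_{t^j}a_{\alpha\beta}$ (the one generating the bilinear form $B_{\partial_{t^j}}$ in \eqref{deriv-B}). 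So $v$ solves the same type of linear parabolic problem, with the same operator $L$, but with a modified right-hand side $\widetilde f$ and — crucially — with zero initial data, since $\partial_{t^k}f(0,x)=0$ for $k\le l-1$ and the equation at $t=0$ propagates this to $\partial_{t^k}u(0,x)=0$ for $k=0,\dots,l-1$, in particular $v|_{t=0}=\partial_{t^l}u|_{t=0}$ is determined and the initial condition for $v$ is homogeneous in the sense needed for the $H^{m,1*}$-estimate. Then I would apply the base-case energy estimate to $v$ to obtain
\[
\|v|H^{m,1*}(D_T)\| \le C\,\|\widetilde f|L_2([0,T],H^{-m}(D))\|.
\]

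The main work is then to bound $\|\widetilde f|L_2([0,T],H^{-m}(D))\|$ by the right-hand side of the claimed estimate. The term $\partial_{t^l}f$ is directly controlled by hypothesis. For the commutator terms $L_{\partial_{t^j}}(t,x,D_x)\partial_{t^{l-j}}u$ with $1\le j\le l$, I would use that the coefficients $a_{\alpha\beta}$ and all their time derivatives are bounded (they are in $C^\infty(D_T)$ and $D_T$ has compact closure), so $L_{\partial_{t^j}}(t,\cdot,D_x)$ maps $\mathring H^m(D)$ to $H^{-m}(D)$ with operator norm bounded uniformly in $t$; hence
\[
\|L_{\partial_{t^j}}(t,\cdot,D_x)\partial_{t^{l-j}}u(t)|H^{-m}(D)\| \lesssim \|\partial_{t^{l-j}}u(t)|\mathring H^m(D)\|,
\]
and integrating in $t$ gives $\lesssim \|\partial_{t^{l-j}}u|H^{m,1*}(D_T)\|$ (using $H^{m,1*}\hookrightarrow L_2([0,T],\mathring H^m)$, which is immediate from the definition). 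By the induction hypothesis this last quantity is $\lesssim \sum_{k=0}^{l-1}\|\partial_{t^k}f|L_2([0,T],H^{-m}(D))\|$. Combining everything and then summing the estimates for $k=0,\dots,l$ yields the asserted bound with a new constant $C$ depending on $l$, $T$, the ellipticity/coercivity constants, and $\sup_{D_T}|\partial_{t^j}a_{\alpha\beta}|$ for $j\le l$.

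The step I expect to be the main obstacle — or at least the one requiring the most care — is the rigorous justification that $v=\partial_{t^l}u$ actually \emph{exists} as an element of $H^{m,1*}(D_T)$ and genuinely solves the differentiated equation, rather than just formally. The clean way is not to differentiate $u$ directly but to run the argument the other way: define $w$ to be the weak $H^{m,1*}$-solution of the problem with right-hand side $\widetilde f$ (which is well defined by the base case once one knows $\widetilde f\in L_2([0,T],H^{-m}(D))$, and the latter follows from the induction hypothesis as above), and then show by a difference-quotient argument in $t$, or by uniqueness of weak solutions together with integration of $w$ in time, that $w=\partial_{t^l}u$. One must also check that the compatibility condition $\partial_{t^k}f(0,x)=0$ for $k\le l-1$ indeed forces $\partial_{t^k}u(0,x)=0$ for $k\le l-1$, which is where the hypothesis is used and which makes the initial data for the $v$-problem homogeneous; this is a straightforward induction using the equation evaluated at $t=0$, but it is the place where the stated compatibility assumption is essential and should be spelled out. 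Everything else is routine bookkeeping with the Leibniz rule and the uniform bounds on the coefficients.
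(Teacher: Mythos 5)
Your proposal is correct and follows essentially the same route as the proof the paper relies on (which is deferred to \cite[Thm.~4.2.3]{co-habil} and is the classical bootstrapping argument): induction on $l$, differentiating the weak formulation in $t$ so that $v=\partial_{t^l}u$ solves the same problem with right-hand side $\partial_{t^l}f$ minus the Leibniz commutator terms built from $B_{\partial_{t^j}}$, verifying via the compatibility conditions $\partial_{t^k}f(0,\cdot)=0$ that the initial data for $v$ vanish, and absorbing the commutator terms using the induction hypothesis and the uniform bounds on $\partial_{t^j}a_{\alpha\beta}$. You also correctly identify the one genuinely delicate point — justifying that $\partial_{t^l}u$ exists in $H^{m,1*}(D_T)$ rather than only formally, which is handled exactly as you suggest (Galerkin approximation or solving for $w$ with right-hand side $\widetilde f$ and identifying $w=\partial_{t^l}u$ by uniqueness) — so no gap remains.
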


\begin{rem}
Note that the regularity results for the solution $u$ in  \cite[Thm.~2.1., Lem.~3.1]{LL15} are slightly stronger than the ones obtained in Theorem \ref{Sob-reg-3} above (with the cost of also assuming more regularity on the right hand side $f$).  
By using similar arguments as in  \cite[Lem.~4.3]{AH08} we are probably able to  show  
that if  $f\in L_2([0,T], L_2(D))$ then the weak solution $u$ of Problem \ref{prob_parab-1a} belongs in fact  to $L_2([0,T], \mathring{H}^m)\cap H^1([0,T], L_2(D))$ (instead of ${H}^{1,m*}(D_T)$). A corresponding generalization of Theorem \ref{Sob-reg-3} should also be possible in the spirit of \cite[Thm.~3.1]{AH08}. However, for our purposes the above results on the Sobolev regularity are sufficient, so these investigations are postponed for the time being. 
\end{rem}

We  need the following technical assumptions in order to state  the regularity results in Kondratiev spaces. 

\begin{assumption}[{\bf Assumptions on operator pencils}]\label{assumptions} 
Consider the operator pencils $\mathfrak{A}_i(\lambda,t)$,   $i=1,\ldots, l'$ for the vertices  and $A_{\xi}(\lambda,t)$ with $\xi\in M_k$, $k=1,\ldots, l$ for the edges of the polyhedral type domain $D\subset \real^3$ introduced in Section \ref{subsect-op-pen}.  Moreover,  we assume that $t\in[0,T]$ is fixed. \\
Let $\calk^{\gamma}_{p,b}(D)$ and $\calk^{\gamma'}_{p,b'}(D)$ be two Kondratiev spaces, where the singularity set $S$ of $D$ is given by $S=M_1\cup \ldots\cup M_l\cup \{x^{(1)},\ldots, x^{(l')}\}$ and weight parameters $b,b'\in \real$.   Then we  assume that the closed strip between the lines 
\beq\label{op-pen-ass1}
\mathrm{Re}\lambda=b+2m-\frac 32\qquad \text{and}\qquad \mathrm{Re}\lambda=b'+2m-\frac 32
\eeq 
does not contain eigenvalues of    $\mathfrak{A}_i(\lambda,t)$. Moreover, $b$ and $b'$ satisfy
\begin{equation}\label{restr-1-a}
-\delta_-^{(k)}<b+m<\delta_{+}^{(k)}, \qquad -\delta_-^{(k)}<b'+m<\delta_{+}^{(k)}, \quad k=1,\ldots, l, 
\end{equation}
where $\delta_{\pm}^{(k)}$ are defined  as in \eqref{delta_k_op_t}. 
\end{assumption}

\begin{rem} 
 If $l'=0$ we have an edge domain without vertices, cf.   Figure \ref{corner-edge-dom}. In this case condition  \eqref{op-pen-ass1} is empty. Moreover, if $l=0$, we have a corner  domain without edges, in which case condition \eqref{restr-1-a} is empty. 
 For further remarks and explanations  concerning Assumption \ref{assumptions} we refer to \cite[Rem.~3.3]{DS19}.
\end{rem}

\medskip

We recall the following regularity result from \cite[Thm.~4.11]{DS20}, which will be  discussed in Remark \ref{rem-weighted-sob-reg-2} below.

\begin{theorem}[{\bf Kondratiev regularity}]\label{thm-weighted-sob-reg-2}
Let $D\subset \real^3$ be a domain of polyhedral type and  $\eta\in \nat $ with  $\eta\geq 2m$. Moreover, let  $a\in \real$ with   $a\in [-m,m]$.  Assume that the right hand side $f$ of Problem \ref{prob_parab-1a} satisfies 
\begin{itemize}
\item[(i)] $ f\in \bigcap_{l=0}^{\infty}H^l([0,T], L_2(D)\cap \mathcal{K}^{\eta-2m}_{2,a}(D))$. 
\item[(ii)] $\partial_{t^l} f(0,x)=0$, \quad  $l\in \nat_0.$
\end{itemize}
Furthermore, let  Assumption \ref{assumptions}  hold for weight parameters $b=a$ and  $b'=-m$. 
Then for the weak solution $u\in  \bigcap_{l=0}^{\infty}{{H}}^{l+1,m\ast}(D_T)$ of Problem \ref{prob_parab-1a} we have 
$$\partial_{t^{l}} u\in L_2([0,T],\mathcal{K}^{\eta}_{2,a+2m}(D))\qquad \text{for all}\quad l\in \nat_0. $$
In particular, for the derivative $\partial_{t^l} u$  we have the a priori estimate 
\begin{align}
\sum_{k=0}^{l}& \|\partial_{t^{k}} u|{L_2([0,T],\mathcal{K}^{\eta}_{2,a+2m}(D))}\|\notag\\
&
\lesssim  \sum_{k=0}^{l+(\eta-2m)}\|\partial_{t^k} f|{L_2([0,T], \mathcal{K}^{\eta-2m}_{2,a}(D))}\|+\sum_{k=0}^{l+1+(\eta-2m)}\|\partial_{t^k} f|{L_2(D_T)}\|, \notag 
\end{align}
where the  constant is  independent of $u$ and $f$. 
\end{theorem}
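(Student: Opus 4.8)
The plan is to reduce the assertion to the corresponding statement without time derivatives, and then bootstrap in $t$ by differentiating the equation. First I would invoke the linear elliptic regularity theory in Kondratiev spaces (Maz'ya--Rossmann, see \cite{MR10,KMR01}): under Assumption \ref{assumptions} with weight parameters $b=a$ and $b'=-m$, the elliptic operator associated with $L(t,x,D_x)$ (for fixed $t$) realizes an isomorphism between the Kondratiev space $\calk^{\eta}_{2,a+2m}(D)\cap\mathring{H}^m(D)$ and $\calk^{\eta-2m}_{2,a}(D)+H^{-m}(D)$, with operator norm uniformly bounded in $t\in[0,T]$ (this uniformity is where the hypothesis $f\in\bigcap_l H^l([0,T],\cdot)$ and the infimum over $t$ in \eqref{delta_k_op_t} are used). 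Combined with the Sobolev regularity already available from Theorem \ref{Sob-reg-3}, which guarantees $u\in\bigcap_l H^{m,l+1*}(D_T)$ and in particular $\partial_{t^k}u(t)\in\mathring H^m(D)$ for a.e.\ $t$, this yields for $l=0$ that $u(t)\in\calk^{\eta}_{2,a+2m}(D)$ for a.e.\ $t$, together with the a priori bound
\[
\|u(t)|\calk^{\eta}_{2,a+2m}(D)\|\lesssim \|f(t)|\calk^{\eta-2m}_{2,a}(D)\|+\|Lu(t)+\partial_t u(t)\ \text{corrections}\|,
\]
which after integrating the square in $t$ over $[0,T]$ and using Theorem \ref{Sob-reg-3} gives the claimed estimate for $l=0$.

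Next I would handle $l\geq 1$ by induction. Differentiating \eqref{parab-1a} $k$ times in $t$ (legitimate since the coefficients $a_{\alpha\beta}\in C^\infty(D_T)$ and $f$ is smooth in $t$ with vanishing traces), the function $v:=\partial_{t^k}u$ satisfies an equation of the same form,
\[
\partial_t v+(-1)^m L(t,x,D_x)v \;=\; \partial_{t^k}f-\sum_{j=1}^{k}\binom{k}{j}(-1)^m\big(\partial_{t^j}L\big)(t,x,D_x)\,\partial_{t^{k-j}}u,
\]
again with zero initial and boundary data (the initial condition uses exactly the hypothesis $\partial_{t^l}f(0,x)=0$ and Theorem \ref{Sob-reg-3}). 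The right-hand side now involves $\partial_{t^{k-j}}u$ for $j\geq 1$, which by the inductive hypothesis already lie in $L_2([0,T],\calk^{\eta}_{2,a+2m}(D))$; since $\partial_{t^j}L$ is a differential operator of order $2m$ with $C^\infty$ coefficients, the term $(\partial_{t^j}L)\partial_{t^{k-j}}u$ lies in $L_2([0,T],\calk^{\eta-2m}_{2,a+2m}(D))\hookrightarrow L_2([0,T],\calk^{\eta-2m}_{2,a}(D))$ by monotonicity \eqref{kondratiev-emb} in the weight parameter (using $a+2m\geq a$). Hence the perturbed right-hand side sits in $\bigcap_l H^l([0,T],L_2(D)\cap\calk^{\eta-2m}_{2,a}(D))$ with norm controlled by lower-order data, and applying the $l=0$ result to $v$ closes the induction and produces the stated a priori estimate, with the index ranges $l+(\eta-2m)$ and $l+1+(\eta-2m)$ appearing precisely because each application of elliptic regularity in $\calk^{\eta}$ costs $\eta-2m$ in the hierarchy of Sobolev/time-derivative indices from Theorem \ref{Sob-reg-3}.

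The main obstacle I anticipate is the uniformity in $t$ of the elliptic isomorphism and of its inverse's operator norm: one must check that the eigenvalue-free strip condition in Assumption \ref{assumptions}, imposed pointwise in $t$ and then intensified by taking $\inf_{t\in[0,T]}$ in \eqref{delta_k_op_t}, together with continuity of $a_{\alpha\beta}(\cdot,x)$, indeed yields a constant in the Kondratiev a priori estimate that does not blow up as $t$ ranges over $[0,T]$. A secondary technical point is justifying the termwise $t$-differentiation of the weak formulation and that the resulting $v=\partial_{t^k}u$ is the weak solution of the differentiated problem with zero traces — this is standard for parabolic problems with smooth coefficients (cf.\ \cite[Rem.~2.3.5]{co-habil} and the references in \cite{DS20}) but needs to be invoked carefully so that Theorem \ref{Sob-reg-3} can be applied to each differentiated equation. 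Everything else is bookkeeping with the embeddings \eqref{kondratiev-emb} and the pointwise multiplier/mapping properties of $L$ on Kondratiev scales.
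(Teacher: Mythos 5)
The paper does not prove this theorem here at all: it is recalled verbatim from \cite[Thm.~4.11]{DS20} (with the detailed proof in \cite[Thm.~4.2.12]{co-habil}), so the only fair comparison is with the structure of that argument, which — like yours — rests on rewriting the equation as the elliptic problem $(-1)^mLu=f-\partial_tu$ for fixed $t$, invoking the Kondratiev-space regularity/isomorphism theory of Maz'ya--Rossmann under Assumption \ref{assumptions}, and differentiating the equation in $t$. Your overall strategy is therefore the right one, and your remarks on uniformity in $t$ and on justifying the termwise differentiation are well placed.

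However, there is a genuine gap in the induction structure. Your base case $l=0$ applies the elliptic shift at the \emph{full} smoothness level $\eta$ in one step, which requires the right-hand side $f-\partial_tu$ of the elliptic problem to lie in $\calk^{\eta-2m}_{2,a}(D)$. But at that stage $\partial_tu$ is only known (from Theorem \ref{Sob-reg-3}) to lie in $\mathring{H}^m(D)$ for a.e.\ $t$; for $\eta>2m$ (and certainly for $\eta>3m$) this is not enough spatial smoothness, so the isomorphism cannot be applied and the ``$l=0$ estimate'' you start from is not available. To know that $\partial_tu(t)\in\calk^{\eta-2m}_{2,a}(D)$ one must already have proved the theorem for the first time derivative at the \emph{lower} smoothness level $\eta-2m$ (or, stepping by one, $\eta-1$); in other words the correct argument is a nested induction on the smoothness parameter $\eta$ interleaved with the time derivatives, each unit gain in $\eta$ beyond $2m$ consuming one additional time derivative of $u$, hence of $f$. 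This is exactly what produces the shifted summation ranges $l+(\eta-2m)$ and $l+1+(\eta-2m)$ on the right-hand side of the a priori estimate — ranges which your induction on $l$ alone, with a one-shot base case, cannot generate. (A minor additional slip: the commutator term $(\partial_{t^j}L)\partial_{t^{k-j}}u$ maps $\calk^{\eta}_{2,a+2m}$ into $\calk^{\eta-2m}_{2,a}$ directly, since a $2m$-th order operator lowers both the smoothness and the weight index by $2m$ in the Kondratiev scale; the intermediate space $\calk^{\eta-2m}_{2,a+2m}$ you wrote down is not where it lands, though the final inclusion you need is correct.)
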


\remark{\label{rem-weighted-sob-reg-2}
\bit  
\item We remark that the results from Theorems \ref{Sob-reg-3} and  \ref{thm-weighted-sob-reg-2} together yield that under the assumptions of Theorem \ref{thm-weighted-sob-reg-2} on the right hand side $f$  we have the following estimate for the solution $u=\tilde{L}^{-1}f$ of Problem \ref{prob_parab-1a}: 
\beq \label{apriori-est}
\|u|H^{l}([0,T],H^m(D)\cap\mathcal{K}^{\eta}_{2,a+2m}(D))\|  \lesssim_l  \|f| H^{l+1+\eta-2m}([0,T],L_2(D)\cap \mathcal{K}^{\eta-2m}_{2,a}(D))\|. 
\eeq 
Moreover, a careful inspection of the
proofs of Theorems \ref{Sob-reg-3} and  \ref{thm-weighted-sob-reg-2} (which may be found in \cite[Thms. 4.2.3, 4.2.12]{co-habil}) shows that for time independent coefficients  the critical constants
are {\em independent} of $l$ (which is why we restricted ouselves to this case here).
\item Due to   technical reasons, later on  we   work  with   the spaces  $H^l_{|\cdot|}([0,T],X)$, which stand for  $H^l([0,T],X)$ but only w.r.t. the  half norms $|u|_{H^l([0,T],X)}:=\|\partial_l u|L_2([0,T],X)\|$ involving the highest derivatives. Therefore, we wish to point out  that, assuming  $\eta=2m$ and   the coefficients of $\tilde{L}$ to be time independent, the a priori estimates from Theorems \ref{Sob-reg-3} and  \ref{thm-weighted-sob-reg-2} are  also valid when  considering the highest derivatives only, i.e., instead of   \eqref{apriori-est} we obtain  the substitute 
\begin{align} 
\|\partial_l u|L_2([0,T],H^m(D)\cap\mathcal{K}^{2m}_{2,a+2m}(D))\| & \lesssim_l  \|\partial_{l } f| L_2([0,T], \mathcal{K}^{0}_{2,a}(D))\|+\sum_{k=0}^1\|\partial_{l+k} f| L_2(D_T)\|\notag \\
& \lesssim_l \sum_{k=0}^1\|\partial_{l+k}f| L_2([0,T],L_2(D)\cap \mathcal{K}^{0}_{2,a}(D))\|.\label{apriori-est-1}
\end{align}
This follows from a close inspection of the proofs of Theorems \ref{Sob-reg-3} and  \ref{thm-weighted-sob-reg-2}: In particular, the proof of Theorem \ref{Sob-reg-3} requires that the coefficients need to be time independent. Moreover, the proof of Theorem \ref{thm-weighted-sob-reg-2} works for $\eta=2m$ only. Even for    $l=0$ we  otherwise get on the right hand  side of the a priori estimate  the    norm $\| f| L_2([0,T], \mathcal{K}^{\eta-2m}_{2,a}(D))\|$ in addition to the highest derivative $\| \partial_{\eta-2m}f| L_2([0,T], \mathcal{K}^{\eta-2m}_{2,a}(D))\|$,  which we wish to omit. 
\item Note that in \cite[Thm.~4.9, Thm.~4.11]{DS20} we established two different kinds of regularity results  in Kondratiev spaces of the linear Problem \ref{prob_parab-1a}. There we only used the results from \cite[Thm.~4.9]{DS20} when dealing with the nonlinear setting.  However, in order to now also deal with nonlinear problems on non-convex domains, we want to generalize the regularity results from  Theorem \ref{thm-weighted-sob-reg-2} for Problem \ref{prob_nonlin}:  
In Theorem \ref{thm-weighted-sob-reg-2} compared to \cite[Thm.~4.9]{DS20} we only require the parameter $a$ to satisfy $a\in [-m,m]$ and $-\delta_{-}^{(k)}<a+m<\delta_{+}^{(k)}$ independent of the regularity parameter {$\eta$} which can be arbitrarily high. In particular, for the heat equation on 
a  domain of polyhedral type $D$ (which for simplicity we assume to be  a polyhedron with straight edges and faces where   $\theta_k$ denotes the angle at the edge $M_k$), we have $\delta_{\pm}^{(k)}=\frac{\pi}{\theta_k}$, which leads to the restriction 
$ 
-1\leq  a<\min\left(1, \frac{\pi}{\theta_k}-1\right).  
$ 
Therefore,  even in the extremal case when $\theta_k=2\pi$ we can still take $-1\leq a<-\frac 12$ (resulting in $u\in L_2([0,T], \calk^{\eta}_{2,a+2}(D))$ being locally integrable since $a+2>0$). Then choosing  {$\eta$} arbitrary high,  we  also cover non-convex polyhedral type domains with our results from Theorem \ref{thm-weighted-sob-reg-2}.
\eit 
}

\subsection{Schauder's fixed point theorem}

We want to show that the regularity estimates in Kondratiev and Sobolev spaces of the linear Problem \ref{prob_parab-1a}  stated in Theorems     \ref{Sob-reg-3} and  \ref{thm-weighted-sob-reg-2}  carry over to Problem \ref{prob_nonlin}, provided that $\varepsilon$ is sufficiently small. In order to do this we interpret Problem \ref{prob_nonlin} as a fixed point problem in the following way. 

 Let $\mathcal{D}$ and $S$ be locally convex spaces ({$\mathcal{D}$ and $S$ will be specified in Theorem \ref{nonlin-B-reg1} below}) and let $\tilde{L}^{-1}:\mathcal{D}\rightarrow S$ be the linear operator defined  via
\begin{equation} \label{tildeL}
\tilde{L}u:=\frac{\partial}{\partial t}u+Lu. 
 \end{equation}
 Problem \ref{prob_nonlin}  is equivalent to 
\[
\tilde{L}u=f-\varepsilon u^{M}=:Nu,
\] 
where $N$ is a nonlinear operator. If we can show that $N$ maps $S$ into $\mathcal{D}$, then a solution of Problem \ref{prob_nonlin} is a fixed point of the problem 
\[
(\tilde{L}^{-1}\circ N)u=u.
\]

Our aim is  to apply  the following generalization of Schauder's Fixed Point Theorem from \cite[Thm.~A]{MN75}.

\begin{theorem}[Schauder's Fixed Point Theorem]
Let $S_0$ be a convex subset of a locally convex space $S$ and $T$ a continuous map of $S_0$ into a compact subset of $S_0$. Then $T$ has a fixed point.  
\end{theorem}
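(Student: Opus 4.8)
The statement is the Schauder--Tychonoff fixed point theorem in the sharp form where only the image $T(S_0)$, rather than $S_0$ itself, is required to be relatively compact. The plan is to reduce it to Brouwer's theorem in finite dimensions by means of Schauder projections, produce a net of approximate fixed points, and extract a genuine fixed point by a limiting argument; as is standard when one speaks of locally convex spaces here, $S$ is assumed Hausdorff. Denote by $K$ the compact subset of $S_0$ with $T(S_0)\subseteq K$, and observe that, $S_0$ being convex, $\mathrm{co}(K)\subseteq S_0$, so $T$ is defined on the convex hull of $K$.

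\emph{Finite-dimensional reduction.} Fix a continuous seminorm $p$ on $S$ and $\varepsilon>0$. By compactness choose $y_1,\dots,y_n\in K$ whose $p$-balls of radius $\varepsilon$ cover $K$, put $\phi_i(y):=\max\{0,\varepsilon-p(y-y_i)\}$ and, for $y\in K$,
\[
P_{p,\varepsilon}(y):=\Big(\sum_{j}\phi_j(y)\Big)^{-1}\sum_{i}\phi_i(y)\,y_i .
\]
Then $P_{p,\varepsilon}$ is continuous on $K$, takes values in the finite-dimensional simplex $\Delta:=\mathrm{co}\{y_1,\dots,y_n\}\subseteq\mathrm{co}(K)\subseteq S_0$, and satisfies $p\big(P_{p,\varepsilon}(y)-y\big)<\varepsilon$ for all $y\in K$. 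Consequently $P_{p,\varepsilon}\circ T$ restricts to a continuous self-map of the compact convex finite-dimensional set $\Delta$; since $\Delta$ is homeomorphic to a compact convex subset of some $\R^n$, Brouwer's theorem furnishes $x_{p,\varepsilon}\in\Delta\subseteq S_0$ with $P_{p,\varepsilon}\big(Tx_{p,\varepsilon}\big)=x_{p,\varepsilon}$, and hence $p\big(x_{p,\varepsilon}-Tx_{p,\varepsilon}\big)<\varepsilon$ because $Tx_{p,\varepsilon}\in K$.

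\emph{Passage to the limit.} Direct the set $\mathcal I$ of pairs $(p,\varepsilon)$ by $(p_1,\varepsilon_1)\preceq(p_2,\varepsilon_2)$ iff $p_1\le p_2$ pointwise and $\varepsilon_1\ge\varepsilon_2$; this is directed because $\max(p_1,p_2)$ is again a continuous seminorm. The net $\big(Tx_{p,\varepsilon}\big)_{(p,\varepsilon)\in\mathcal I}$ takes values in the compact set $K$, so it has a cluster point $y^{\ast}\in K\subseteq S_0$; pass to a subnet converging to $y^{\ast}$. Given any continuous seminorm $q$ and $\delta>0$, eventually along the subnet $(p,\varepsilon)\succeq(q,\delta/2)$, whence $q\big(x_{p,\varepsilon}-Tx_{p,\varepsilon}\big)\le p\big(x_{p,\varepsilon}-Tx_{p,\varepsilon}\big)<\varepsilon\le\delta/2$; combining this with $Tx_{p,\varepsilon}\to y^{\ast}$ along the subnet shows $x_{p,\varepsilon}\to y^{\ast}$ along the same subnet. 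Continuity of $T$ at $y^{\ast}\in S_0$ then gives $Tx_{p,\varepsilon}\to Ty^{\ast}$, while the subnet also converges to $y^{\ast}$; since $S$ is Hausdorff, $Ty^{\ast}=y^{\ast}$.

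\emph{Principal difficulty.} The one subtle point is that compactness is assumed only for the image $K$, not for $\mathrm{co}(K)$ or its closure --- and $\overline{\mathrm{co}(K)}$ need not lie in $S_0$, since $S_0$ is not assumed closed --- so there is no one-line reduction to the Tychonoff theorem on a compact convex set. The remedy is exactly the layout above: carry out the Brouwer step inside finite-dimensional simplices $\Delta\subseteq\mathrm{co}(K)\subseteq S_0$, and recover the actual fixed point as a cluster point of the net $(Tx_{p,\varepsilon})$, which does lie in the compact set $K$, rather than of $(x_{p,\varepsilon})$ itself. Setting up the directed index set carefully, so that the seminorm bounds coming from the Schauder projections can be combined with the subnet convergence, is the other place that demands a little attention.
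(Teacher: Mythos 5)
The paper does not prove this statement at all: it is quoted verbatim from \cite{MN75} (Theorem A there), so there is no in-paper argument to compare yours against. Your proof is correct and complete as a self-contained argument: the Schauder projection $P_{p,\varepsilon}$ built from a finite $\varepsilon$-net of $K$ in the seminorm $p$, Brouwer's theorem applied on the finite-dimensional simplex $\Delta\subseteq \mathrm{co}(K)\subseteq S_0$, and the extraction of a fixed point as a cluster point of the net $\bigl(Tx_{p,\varepsilon}\bigr)$ inside the compact set $K$ all go through, and you rightly flag the two points needing care --- that only $T(S_0)$ (not $\overline{\mathrm{co}}(K)$) is assumed compact and contained in $S_0$, and that the Hausdorff property of $S$ is needed for uniqueness of the limit in the final step, an assumption implicit in the paper's statement and harmless in its application, where $S$ is an intersection of Sobolev spaces.
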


Therefore, Schauders's fixed point theorem will  guarantee the existence of a  solution, if we can show that  
\begin{equation} T:=(\tilde{L}^{-1}\circ N): S_0\rightarrow S_0\quad \text{is continuous}, \end{equation} 
where  $S_0\subset S$ is a  convex subset (which in our case is defined as an intersection of countably many closed balls    in the Sobolev spaces $H^k_{|\cdot|}([0,T],X)$, $k\in \nat$,   centered at the solution of the corresponding linear problem $\tilde{L}^{-1}f$ with suitably chosen radii $R_k>0$) and  
 \begin{equation}T(S_0)\subset {S_0} \quad \text{is compact}. \end{equation}

\subsection{Regularity results in Sobolev and Kondratiev spaces for Problem II}

Our main result is stated in the theorem below. Note that below we restrict ourselves to Kondratiev regularity $\eta = 2m$. We refer to the explanations given in Remark \ref{rem-weighted-sob-reg-2} in this context.

\begin{theorem}[{\bf Nonlinear Kondratiev regularity}]\label{nonlin-B-reg1}
Let $\tilde{L}$ and $N$ be as described above. Assume the assumptions of Theorem \ref{thm-weighted-sob-reg-2} are satisfied and, additionally, we have $m\geq  2$  and $a\geq - \frac 12$. Moreover, we put 
\[
\D_l:=H^l_{|.|}([0,T], L_2(D)\cap \mathcal{K}^{0}_{2,a}(D)), \quad 
\]
and consider the data space 
$$
\D :=\left\{f\in \bigcap_{l=0}^{\infty} \D_l : \  
\partial_{t^l} f(0,\cdot)=0, \quad l\in \nat_0\right\}.   
$$ 
Moreover, let 
\begin{align*}
S_k&:= H^{k}_{|.|}([0,T],\mathring{H}^m(D)\cap \mathcal{K}^{2m}_{2,a+2m}(D)), \qquad
\end{align*}
and  consider the solution space 
$$S:=\bigcap_{k=0}^{\infty}S_k.$$

Suppose that $f\in \D$ 
(which implies  $\tau_l:=|f|_{\D_l}<\infty$ for all $l\in \nat_0$). 
Then choosing $\varepsilon>0$ sufficiently small,  there exists a  solution $u\in S_0\subset S$ of Problem \ref{prob_nonlin}, where $S_0$ denotes the intersection of  balls  around $\tilde{L}^{-1}f$ (the solution of the corresponding linear problem)  with  radii  $R_k>0$,  i.e., 
\begin{equation}\label{def_S0}
S_0:=\bigcap_{k=0}^{\infty} B_{{k}}^{|\cdot|}(\tilde{L}^{-1}f,R_{k}),  
\end{equation}
where  $B_{{k}}^{|\cdot|}(\tilde{L}^{-1}f,R_{k })$ denotes the closed ball in $S_{k}$. 
\end{theorem}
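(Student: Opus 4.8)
The plan is to verify the two hypotheses of the Schauder fixed point theorem (as stated above) for the map $T=\tilde L^{-1}\circ N$ acting on the convex set $S_0$ defined in \eqref{def_S0}, equipped with the locally convex topology inherited from $S=\bigcap_k S_k$ (the topology generated by the countable family of seminorms $\|\cdot|S_k\|$). Concretely one has to show: (i) $N$ maps $S$ into $\D$ and $T(S_0)\subseteq S_0$; (ii) $T$ is continuous on $S_0$; and (iii) $T(S_0)$ is relatively compact in $S_0$. Throughout, the two a~priori estimates available to us are the linear estimates from Theorems~\ref{Sob-reg-3} and~\ref{thm-weighted-sob-reg-2}, summarised in \eqref{apriori-est}, together with the pointwise multiplier estimate from Corollary~\ref{thm-pointwise-mult-2}; the latter is what forces the extra smoothness assumptions $m\ge 2$ and $a\ge-\tfrac12$, since we need $a\in[-m,m]$, $p=2>d/2=3/2$ and the index arithmetic in Corollary~\ref{thm-pointwise-mult-2} to close when iterated $M-1$ times.

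\textbf{Step 1: $N$ maps $S_0$ into $\D$ and the self-mapping property.} First I would show that for $u\in S$ one has $u^M\in \D$. The key is that the Kondratiev scale is a multiplication algebra in the relevant range: iterating Corollary~\ref{thm-pointwise-mult-2} gives $\|u^M|\calk^{\eta-2m}_{2,a}(D)\|\lesssim \|u|\calk^{\eta}_{2,a+2m}(D)\|^M$ provided $\eta-2m\ge 0$ and the weight/integrability conditions hold (here $m\ge2$ guarantees $\eta\ge 2m\ge 4$ and $a\ge -\tfrac12$ guarantees $a\ge d/p-2=-\tfrac12$, so the corollary applies, and the successive weight shifts $a+2m\rightsquigarrow a+2m-2\rightsquigarrow\cdots$ stay above $a$). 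Combined with the product rule in the Banach-valued spaces $H^l([0,T],\cdot)$ and the generalized Sobolev embedding $H^l([0,T],X)\hookrightarrow C^{l-1}([0,T],X)$ (Theorem~\ref{thm-sob-emb}), one gets $u^M\in\bigcap_l \D_l$ with a bound $\|u^M|\D_l\|\lesssim \|u|S_l\|^M$ (plus lower-order terms from the Leibniz rule, controllable by the same quantities); the vanishing traces $\partial_{t^l}(u^M)(0,\cdot)=0$ follow from $\partial_{t^l}u(0,\cdot)=0$ for all $l$, which holds because $u\in S_0$ lies in a ball around $\tilde L^{-1}f$ and both the center and the balls respect the vanishing-trace condition. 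Hence $Nu=f-\varepsilon u^M\in\D$. For the self-mapping property, write $Tu-\tilde L^{-1}f=\tilde L^{-1}(Nu-f)=-\varepsilon\,\tilde L^{-1}(u^M)$; using \eqref{apriori-est} (with the crucial fact noted in Remark~\ref{rem-weighted-sob-reg-2} that the constant is independent of $l$ for time-independent coefficients) one bounds $\|Tu-\tilde L^{-1}f|S_k\|\le \varepsilon\|\tilde L^{-1}\|\cdot\|u^M|\D_{k+1+\eta-2m}\|\le \varepsilon\|\tilde L^{-1}\|\cdot C\|u|S_{k+1+\eta-2m}\|^M$. On $S_0$ we have $\|u|S_j\|\le \|\tilde L^{-1}f|S_j\|+R\le r_0\tilde\eta\|\tilde L^{-1}\|$ for every $j$ (using $\tilde\eta=\sup_l\|f|\D_l\|$ and the definition $R=(r_0-1)\tilde\eta\|\tilde L^{-1}\|$), so the right-hand side is $\le \varepsilon\|\tilde L^{-1}\|^{M+1}\cdot C\cdot r_0^M\tilde\eta^M$; the choice of $\varepsilon$ in the hypothesis, $\tilde\eta^{M-1}\|\tilde L^{-1}\|^M\le (r_0-1)/(\varepsilon r_0^M)$, makes this $\le R$. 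Thus $Tu\in B_{k+1}(\tilde L^{-1}f,R)$ for all $k$, i.e.\ $T(S_0)\subseteq S_0$.

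\textbf{Step 2: continuity of $T$.} Since $\tilde L^{-1}$ is linear and bounded $\D\to S$ (estimate \eqref{apriori-est}), it suffices to show $u\mapsto u^M$ is continuous from $S_0$ (with the $S$-topology) into $\D$. This is a standard polynomial-continuity argument: for $u,v\in S_0$, $u^M-v^M=(u-v)\sum_{j=0}^{M-1}u^j v^{M-1-j}$, and applying the multiplier estimate $M-1$ times together with the uniform bound $\|u|S_j\|,\|v|S_j\|\le r_0\tilde\eta\|\tilde L^{-1}\|$ on $S_0$ gives $\|u^M-v^M|\D_l\|\lesssim_{l} \|u-v|S_{l'}\|$ for a suitable finite index $l'=l+\text{(fixed shift)}$, with a constant depending only on $r_0,\tilde\eta,\|\tilde L^{-1}\|$. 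This is precisely continuity in the seminorm topology of $\D$, hence $N$ and therefore $T$ is continuous.

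\textbf{Step 3: compactness of $T(S_0)$ --- the main obstacle.} This is the delicate point, because $S_0$ is \emph{not} bounded in a single Banach space but only in the Fr\'echet-type space $S$, and Schauder needs $T(S_0)$ to lie in a \emph{compact} subset of $S_0$. The idea is a ``one extra derivative in space'' gain: the image point $Tu=\tilde L^{-1}(Nu)$ enjoys, via \eqref{apriori-est} applied with $\eta$ replaced by $\eta+1$ (or by exploiting that $f\in\D$ has arbitrarily high Kondratiev \emph{and} time regularity), strictly better regularity than a generic element of $S_0$ --- e.g.\ $\partial_{t^k}Tu\in L_2([0,T],\calk^{\eta+1}_{2,a+2m+1}(D))$ for all $k$, or a uniform bound in $H^{k+1}$ in time --- and the embeddings $\calk^{\eta+1}_{2,a+2m+1}(D)\hookrightarrow\calk^{\eta}_{2,a+2m}(D)$ and $H^{k+1}([0,T],X)\hookrightarrow H^{k}([0,T],X)$ are \emph{compact} on a bounded time interval (Aubin--Lions / Rellich-type compactness, using compactness of $\calk^{m'}_{2,b'}\hookrightarrow\calk^m_{2,b}$ for $m'>m$, $b'>b$ on bounded domains of polyhedral type). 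One then needs the uniform bound on these finer norms over $u\in S_0$, which again comes from $\sup_l\|f|\D_l\|=\tilde\eta<\infty$ and the $l$-independence of the constants; this produces a set $K:=\{w\in S_0:\ \|\partial_{t^k}w|L_2([0,T],\calk^{\eta+1}_{2,a+2m+1}(D))\|\le C_k,\ \text{and similarly with one more time derivative}\}$ which is compact in the $S$-topology (a bounded subset of a nuclear/Fr\'echet space in the finer seminorms is precompact in the coarser ones; alternatively one checks sequential compactness via a diagonal argument) and convex (and closed in $S_0$ if we take closures). Showing $T(S_0)\subseteq K$ with $K$ convex, compact, $K\subseteq S_0$ then completes the hypotheses, and Schauder yields a fixed point $u\in S_0$, which by construction solves $\tilde L u=f-\varepsilon u^M$, i.e.\ Problem~\ref{prob_nonlin}. \emph{The hard part is making this compactness step rigorous:} one must pin down exactly which finer space gives relative compactness in $S$, verify the Rellich-type compact embedding for Kondratiev spaces on polyhedral domains (this needs the extra weight room, hence another place $a\ge-\tfrac12$ and $m\ge2$ enter), and check that the enlarged data regularity needed to bound $Tu$ in that finer space is indeed furnished by $f\in\D$ --- keeping careful track that Assumption~\ref{assumptions} with $b=a,\ b'=-m$ still covers the pencils when $\eta$ is bumped up (it does, since those conditions were imposed independently of $\eta$, cf.\ Remark~\ref{rem-weighted-sob-reg-2}).
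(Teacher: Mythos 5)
Your Steps 1 and 2 (the self-mapping property via the choice of $\varepsilon$ and $R$, the membership $u^M\in\D$ with vanishing traces, and the Lipschitz-type continuity estimate obtained from Leibniz, Fa\`a di Bruno and Corollary \ref{thm-pointwise-mult-2}) reproduce the substance of the paper's Steps 2--4, including the crucial use of the $l$-independence of $\|\tilde L^{-1}\|$ and the bound $\|u|S_j\|\le R+\|\tilde L^{-1}\|\tilde\eta=r_0\tilde\eta\|\tilde L^{-1}\|$ on $S_0$. The genuine gap is in your Step 3, and you have in fact misidentified where the compactness comes from. The paper does \emph{not} extract compactness from any smoothing property of $T$: it observes that the set $S_0=\bigcap_{k}B_{k+1}(\tilde L^{-1}f,R)$ is \emph{itself} compact in the topology of $S$, because the balls are taken in the spaces $H^{k+1}([0,T],X)$ while the seminorms $p_k(u)=\|u|H^k([0,T],X)\|$ defining the topology of $S$ live one index lower; each ball $B_{k_0+1}$ is then precompact with respect to $p_{k_0}$ via the compact embedding $H^{k_0+1}([0,T],X)\hookrightarrow H^{k_0}([0,T],X)$, and a diagonal-sequence extraction over $k\in\nat_0$ (together with completeness of the Fr\'echet metric on $S$) yields compactness of $\mathrm{id}(S_0)$. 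Once $T(S_0)\subseteq S_0$ and continuity are known, Schauder applies directly with this compact set; the index offset built into the definition \eqref{def_S0} is precisely the device that makes this work, and you did not exploit it.

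Your proposed substitute --- gaining one spatial derivative by applying \eqref{apriori-est} with $\eta$ replaced by $\eta+1$ and invoking a Rellich-type compact embedding $\calk^{\eta+1}_{2,a+2m+1}(D)\hookrightarrow\calk^{\eta}_{2,a+2m}(D)$ --- does not close as stated. The hypothesis $f\in\D$ only provides $f\in H^l([0,T],L_2(D)\cap\calk^{\eta-2m}_{2,a}(D))$, so Theorem \ref{thm-weighted-sob-reg-2} at level $\eta+1$ is simply not applicable: you would need $f\in H^l([0,T],\calk^{\eta+1-2m}_{2,a}(D))$, which is strictly more than is assumed, and the uniform bound $\tilde\eta<\infty$ gives you no control in that finer scale. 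You also acknowledge the step is left unrigorous (``the hard part is making this compactness step rigorous''), which for a Schauder argument is exactly the part that cannot be deferred. Replacing your Step 3 by the paper's diagonal argument on the intersection of balls repairs the proof without any additional hypotheses on $f$.
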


\begin{proof}
{\em Step 1: \ Construction of compact sets $Q\subset S$.\  }
The solution space $$S=\bigcap_{k=0}^{\infty}H^{k}_{|.|}([0,T],X),\qquad X:=\mathcal{K}^{2m}_{2,a+2m}(D)\cap \mathring{H}^m(D),$$
is a locally convex Hausdorff space $(S, \{p_k\}_{k\in I})$ equipped with the  family of seminorms $$p_k(u):= |u|_{H^{k}([0,T],X)}, \quad k\in \nat_0.$$ 
Within the space $S$ we consider the set
\[
Q=\bigcap_{k=0}^{\infty}\overline{\mathrm{id}_{k}\left( B_{{k+1}}(\tilde{L}^{-1}f,\tilde{R}_{k+1})\right)},
\]
similar to \eqref{def_S0},  with   radii $\tilde{R}_{k+1}>0$ to be chosen later and $\mathrm{id}_{k}$ explained below. In this step we shall show that for any choice of the $\tilde{R}_{k+1}>0$ the set $Q$ is compact; therefore also
any closed subset $S_0\subset Q$ constructed in subsequent steps will be compact as well, cf. \cite[Thm. 1.29]{Dobro}. \\

\underline{\em Compactness of $Q$:} 
\bit 
\item We do not have the canonical compact embeddings within the Sobolev scale since  the  Sobolev spaces $H^k_{|.|}([0,T,X])$ are not monotonically ordered, i.e.
\mbox{$|\cdot|_{H^{k}([0,T],X)}\nleq |\cdot |_{H^{k+1}([0,T],X)}$}. Rescue comes from the fact that the embedding 
\[
\mathrm{id}_k: H^{k+1}([0,T],X)\hookrightarrow H^{k}([0,T],X)\hookrightarrow  {H}^{k}_{|.|}([0,T],X)\qquad \text{is compact.}
\]
This follows from the fact that the first embedding is compact, i.e., the completion of the image of the unit ball $B_{k+1}$ (and, consequently, of any bounded set) under the identity map $\mathrm{id}_k^{(1)}: H^{k+1}([0,T],X)\rightarrow H^{k}([0,T],X)$ is compact. Moreover, for the second embedding it is true  that $\mathrm{id}_k^{(2)}: H^{k}([0,T],X)\rightarrow {H}^{k}_{|.|}([0,T],X)$ is continuous and therefore the image of a compact set w.r.t. a continuous mapping is again compact. Thus, it remains to argue why 
\[
\mathrm{id}_k^{(2)}\Bigl(\overline{\mathrm{id}_k^{(1)}(B_{k+1})}\Bigr)
	=\overline{\mathrm{id}_k^{(2)}\Bigl(\mathrm{id}_k^{(1)}(B_{k+1})\Bigr)}
	=\overline{\mathrm{id}_k(B_{k+1})},  
\]
which ultimately shows that $\mathrm{id}_k=\mathrm{id}_k^{(2)}\circ \mathrm{id}_k^{(1)}$ is compact. 
However, this follows immediately since continuity is equal to sequential continuity in our spaces (see Appendix \ref{app-A}). 
More generally, we have $\overline{\mathrm{id}_k(B)}=\mathrm{id}_k\left(\overline{B}\right)$ for any ball $B\subset H^{k+1}([0,T],X)$.

\item Now let  
\begin{equation}\label{diag-sequence}
\{y_n^{(0)}\}_n\in Q
=\bigcap_{k=0}^{\infty}\overline{\mathrm{id}_{k}\left(B_{{k+1}}(\tilde{L}^{-1}f,\tilde{R}_{k+1})\right)}.
\end{equation}
Hence, $\{y_n^{(0)}\}_n$ belongs to the compact set $\overline{\mathrm{id}_{k}\left(B_{k+1}(\tilde{L}^{-1}f,\tilde{R}_{k+1})\right)}$ in ${H}^{k}_{|.|}([0,T],X)$ for any $k\in \nat_0$.
Therefore,  according to Remark \ref{comp-seqcomp} we find a subsequence $\{y_{n}^{(1)}\}_n$ of $\{y_{n}^{(0)}\}_n$ in the compact set
$\overline{\mathrm{id}_{k}\left(B_{1}(\tilde{L}^{-1}f,\tilde{R}_{1})\right)}$, which converges w.r.t. $\|\cdot|L_2(D)\|$ with limit in
$\overline{\mathrm{id}_{k}\left(B_{1}(\tilde{L}^{-1}f,\tilde{R}_{1})\right)}$,
\[
y_{n}^{(1)}\xrightarrow[]{n\rightarrow \infty} y^{(1)}\quad \text{in}\quad L_2([0,T],X).  
\]
Moreover, for any $k\in\nat$ we find a subsequence $\{y_n^{(k+1)}\}_n$
of $\{y_n^{(k)}\}_n$, which converges w.r.t. $|\cdot|_{H^{k}}$ with limit in $\overline{\mathrm{id}_{k}\left(B_{k+1}(\tilde{L}^{-1}f,\tilde{R}_{k+1})\right)}$.
\[
y_{n}^{(k+1)}\xrightarrow[]{n\rightarrow \infty} y^{(k+1)}\quad \text{in}\quad {H}^{k}_{|.|}([0,T],X).  
\]
In particular, from the above observations we obtain that 
\[
 y^{(k+1)}=\partial^k y^{(1)}\quad \text{in}\quad L_2([0,T],X), 
\]
which follows by the same standard arguments that one uses for proving the completeness of $H^k([0,T],X)$, cf. \cite[Thm.~5.10]{Dobro}. Hence, we deduce
$y_{n}^{(k+1)}\xrightarrow[]{n\rightarrow \infty} y^{(1)}$ in $H^k([0,T],X)$.  By continuing this procedure we see that this is true for all $k\in \nat_0$. 

Ultimately, we extract the diagonal sequence   $\{y^{\ast}_n\}_n$, given by $y^\ast_n=y_n^{(n)}$, 
satisfying 
\[
y^{\ast}_{n}\xrightarrow[]{n\rightarrow \infty}   y^{(1)} \quad \text{in}\quad {H}^k([0,T],X) \quad \text{for all} \quad k\in \nat_0. 
\]
In conclusion, the subsequence $\{y^{\ast}_n\}_n$ has a unique limit $y^{(1)}$,  which by our construction belongs to $\overline{\mathrm{id}_{k}\left( B_{{k+1}}(\tilde{L}^{-1}f,\tilde{R}_{k+1})\right)}$ for all $k\in \nat_0$, i.e., 
\begin{equation}\label{diag-sequence2}
y^{\ast}_n\xrightarrow[]{n\rightarrow \infty} y^{(1)}\in \bigcap_{k=0}^{\infty}\overline{\mathrm{id}_{k}\left(B_{{k+1}}(\tilde{L}^{-1}f,\tilde{R}_{k+1})\right)}=Q
\end{equation}
showing that $Q$ is  sequentially  compact. Since $Q\subset S$ and $S$ is a metrizable space, we deduce that this is equivalent to $Q$ being compact, cf. Appendix \ref{app-A}.
\eit

{\em Step 2: Definition of subsets $S_0\subset Q$.} Now define
$$S_0:=\bigcap_{k=0}^\infty{B_k^{|\cdot|}(\tilde{L}^{-1}f,R_k)}\,,$$
where the precise choice of the radii $R_k$, $k\in\nat_0$, will follow in subsequent steps. 
In particular, $S_0$ is closed. For this we need to show that for some sequence $\{y_n\}_n\in S_0$ with limit $y^{\ast}$ it follows that $y^{\ast}\in S_0$. This follows by a similar argumentation as in Step 1 above (see the construction below \eqref{diag-sequence}). We briefly sketch the proof. By the definition of $S_0$ and completeness of $L_2(D)$ it follows that $\{y_n\}_n$ converges w.r.t $\|\cdot|L_2(D)\|$ to $y^{\ast}\in B_0^{|\cdot|}(\tilde{L}^{-1}f,R_0)$, i.e., $y_n\xrightarrow{n\rightarrow \infty}y^{\ast}$ in $L_2([0,T],X)$. Moreover, for any $k\in \mathbb{N}$ we see that $\{y_n\}_n$ converges  to $y^{\ast}$ w.r.t $|\cdot|_{H^k}$
in $B_k^{|\cdot|}(\tilde{L}^{-1}f,R_k)$, i.e., $y_n\xrightarrow{n\rightarrow \infty}y^{\ast}$ in $H^k_{|\cdot|}([0,T],X)$. Hence, the unique limit of the sequence $\{y_n\}_n$ belongs to $B_k^{|\cdot|}(\tilde{L}^{-1}f,R_k)$ for all $k\in \mathbb{N}_0$ implying  $y^{\ast}\in S_0$. 
In this step, we shall show that for 
$\tilde{R}_{k+1}:=D_k(R_k+R_0)$, where $D_k$ is the constant from Theorem \ref{thm-eq-norm},  it holds 
$$S_0\subset Q=\bigcap_{k=0} ^{\infty}\overline{\mathrm{id}_{k}\left( B_{{k+1}}(\tilde{L}^{-1}f,\tilde{R}_{k+1})\right)}\,.$$

To this purpose, let $u\in S_0$. Then we have
$$\|u-\tilde{L}^{-1}f\|_{H^{k+1}}\leq D_k \left(|u-\tilde{L}^{-1}f|_{H^{k+1}}+|u-\tilde{L}^{-1}f|_{L_2}\right)
	\leq D_k(R_{k+1}+R_0)=\tilde{R}_{k+1}
 \,.$$
We conclude $u\in\overline{B_{k+1}(\tilde{L}^{-1}f,\tilde{R}_{k+1})}$ for all $k\in\nat_0$, and hence $u\in Q$.\\

{\em Step 3: \ (Continuity of $\tilde{L}^{-1}\circ N)$\  }

Let $u$ be the solution of the linear problem $\tilde{L}u=f$.  From Theorems  \ref{Sob-reg-3} and \ref{thm-weighted-sob-reg-2}, see also  \eqref{apriori-est-1},   we know   that 
$\tilde{L}^{-1}: \D_{l}\cup \D_{l+1}\rightarrow S_l   
$
is a bounded operator. If  $u^M\in {\mathcal{D}}$ (this will immediately follow from our calculations in Step 3  as explained in Step 4 below), the nonlinear part $N$ satisfies the desired mapping properties, i.e., $Nu=f-\varepsilon u^M\in {\mathcal{D}}$. 
Hence, 
we can apply Theorem  \ref{thm-weighted-sob-reg-2}  now with right hand side $Nu$. 
Moreover,  in view of the characterization of continuity in Fr\'echet spaces, the (nonlinear) map $\tilde{L}^{-1}\circ N:S_0\rightarrow S$ is continuous if we can show for all  $l\in \nat_0$   and $u,v\in S_0$ that  
\begin{equation}\label{est-0}
 | (\tilde{L}^{-1}\circ N)(u)-(\tilde{L}^{-1}\circ N)(v) |_{H^l([0,T],X)} \leq  c_l \sum_{k=l}^{l+1}|u-v|_{H^{k}([0,T],X)}, 
\end{equation}
where in this situation the constant $c_l$ may indeed depend on $l$. 
Since 
$
 (\tilde{L}^{-1}\circ N)(u)= \tilde{L}^{-1}(f-\varepsilon u^{M}) 
$ we therefore have to show 
 $$\varepsilon| \tilde{L}^{-1}(u^M-v^M)|_{H^l([0,T],X)}
	\lesssim_l  \sum_{k=l}^{l+1}|u-v|_{H^{k}([0,T],X)}. $$

We use Theorem \ref{thm-weighted-sob-reg-2} and the estimate \eqref{apriori-est-1}, which  yields 
\begin{align}
| \tilde{L}^{-1}(u^M-v^M)|_{H^l([0,T],X)}
&\lesssim_l  \ctL \sum_{i=0}^1|u^M-v^M|_{H^{l+i}([0,T],\mathcal{K}^{0}_{2,a}(D))} \notag \\
& \qquad  +\ctL \sum_{i=0}^1|u^M-v^M|_{H^{l+i}([0,T],L_2(D))}\notag \\
&=: I+II
\label{est-1}
\end{align}
Moreover, we make use of the formula $u^M-v^M=(u-v)\sum_{j=0}^{M-1}u^jv^{M-1-j}$. Concerning the derivatives,  applying Leibniz's formula twice  we see that 
\begin{align}
\partial_{t^k}(u^M-v^M)
&=\partial_{t^k}\left[(u-v)\sum_{j=0}^{M-1} u^jv^{M-1-j}\right]\notag\\
&= \sum_{w=0}^k{k \choose w}\partial_{t^w}(u-v)\cdot \partial_{t^{k-w}}\left(\sum_{j=0}^{M-1}u^j v^{M-1-j}\right)\notag\\
&= \sum_{w=0}^k{k \choose w}\partial_{t^w}(u-v)\cdot 
\left[\left(\sum_{j=0}^{M-1} \sum_{r=0}^{k-w} {{k-w}\choose r} \partial_{t^r}u^j \cdot \partial_{t^{k-w-r}}v^{M-1-j}\right)\right]. \notag\\\label{est-2}
\end{align}
In order to estimate the terms $\partial_{t^r}u^j$ (and similar for $\partial_{t^{k-w-r}}v^{M-1-j}$) we apply Fa\`{a} di Bruno's formula \index{Fa\`{a} di Bruno formula}
\begin{equation}\label{FaaDiBruno}
\partial_{t^r}(f\circ g)=\sum\frac{r!}{\kappa _1!\ldots \kappa_r!}\left(\partial_{t^{\kappa_1+\ldots + \kappa_r}}f\circ g\right)\prod_{{i}=1}^{r}\left(\frac{\partial_{t^{{i}}}g}{{i}!}\right)^{\kappa_{{i}}},
\end{equation}
where  the sum runs over all $r$-tuples of nonnegative integers $(\kappa_1,\ldots, \kappa_r)$ satisfying 
\begin{equation}\label{cond-kr}
1\cdot \kappa_1+2\cdot \kappa_2+\ldots +r\cdot \kappa_r=r.
\end{equation}
We apply the formula to $g=u$ and $f(x)=x^j$. 
In particular, from \eqref{cond-kr}  we see that $\kappa_{r}\leq 1$, where  $r=1,\ldots, k$. Therefore, the highest derivative $\partial_{t^r}u$ appears at most once.  Moreover, we 
  make use of the embeddings  \eqref{kondratiev-emb} and the pointwise multiplier result from Corollary \ref{thm-pointwise-mult-2} (note that this leads to our restriction $a\geq \frac d2-2=-\frac 12$ for  $d=3$). 
This yields

\begin{align}
\Big\|&\partial_{t^r}u^j  \left. | \mathcal{K}^{0}_{2,a}(D)\right\| \notag\\
&\leq  c_{r,j}\left\|\sum_{\kappa_1+\ldots +\kappa_r\leq j, \atop 1\cdot \kappa_1+2\cdot \kappa_2+\ldots +r\cdot \kappa_r=r} u^{j-(\kappa_1+\ldots +\kappa_r)}\prod_{{i}=1}^r \left|\partial_{t^{{i}}}u\right|^{\kappa_{{i}}}\Big| \mathcal{K}^{0}_{2,a}(D)\right\| \notag\\
&\lesssim \sum_{\kappa_1+\ldots +\kappa_r\leq j, \atop 1\cdot \kappa_1+2\cdot \kappa_2+\ldots +r\cdot \kappa_r=r} \left\| u| \mathcal{K}^{2}_{2,a+2}(D)\right\|^{j-(\kappa_1+\ldots +\kappa_r)}
\left\| \partial_{t^{{r}}}u| \mathcal{K}^{0}_{2,a}(D)\right\|^{\kappa_{{r}}}
\prod_{{i}=1}^{r-1} \left\| \partial_{t^{{i}}}u| \mathcal{K}^{2}_{2,a+2}(D)\right\|^{\kappa_{{i}}}\notag \\ 
&\lesssim \sum_{\kappa_1+\ldots +\kappa_r\leq j, \atop 1\cdot \kappa_1+2\cdot \kappa_2+\ldots +r\cdot \kappa_r=r} \left\| u| \mathcal{K}^{2m}_{2,a+2m}(D)\right\|^{j-(\kappa_1+\ldots +\kappa_r)} 
\prod_{{i}=1}^{r} \left\| \partial_{t^{{i}}}u| \mathcal{K}^{2m}_{2,a+2m}(D)\right\|^{\kappa_{{i}}}.  
\label{est-3}
\end{align}
For   $\partial_{t^{k-w-r}}v^{M-1-j}$ we obtain as in  \eqref{est-3}  with $\mathcal{K}^{0}_{2,a}(D)$ replaced by  $\mathcal{K}^{2}_{2,a+2}(D)$ the estimate 
\begin{align}\label{est-3aa}
\|&\partial_{t^{k-w-r}}v^{M-1-j}|\mathcal{K}^{2}_{2,a+2}(D)\| \notag\\
&\lesssim \sum_{\kappa_1+\ldots +\kappa_r\leq M-1-j, \atop 1\cdot \kappa_1+2\cdot \kappa_2+\ldots +(k-w-r)\cdot \kappa_{k-w-r}=k-w-r} \left\| v| \mathcal{K}^{2m}_{2,a+2m}(D)\right\|^{M-1-j-(\kappa_1+\ldots +\kappa_{k-w-r})} 
\prod_{{i}=1}^{k-w-r} \left\| \partial_{t^{{i}}}v| \mathcal{K}^{2m}_{2,a+2m}(D)\right\|^{\kappa_{{i}}}, 
\end{align} 
since $m\geq 2$. 
Now \eqref{est-2}, \eqref{est-3}, and \eqref{est-3aa}  
 together with  Corollary \ref{thm-pointwise-mult-2} give for the first term  in \eqref{est-1}: 
\begin{align}
I& = \ctL \sum_{k=l}^{l+1}|u^M-v^M|_{H^{k}([0,T],\mathcal{K}^{0}_{2,a}(D))}\notag \\
&= \ctL \sum_{k=l}^{l+1}\left(\int_0^T\left\|\partial_{t^k}\left[(u-v)\sum_{j=0}^{M-1} u^jv^{M-1-j}\right]\Big|\mathcal{K}^{0}_{2,a}(D)\right\|^2\ud t\right)^{1/2}\notag\\
& {
\lesssim \ctL \sum_{k=l}^{l+1}\sum_{w=0}^k\sum_{j=0}^{M-1}\sum_{r=0}^{k-w}\Bigg(\int_0^T \left\|\partial_{t^w}(u-v)
|\mathcal{K}^{0}_{2,a}(D)\right\|^2 }\notag \\
& {
\qquad \qquad 
\left\|\partial_{t^r}u^j |\mathcal{K}^{0}_{2,a}(D)\right\|^2
\left\|\partial_{t^{k-w-r}}v^{M-1-j}|\mathcal{K}^{2}_{2,a+2}(D)\right\|^2
\ud t\Bigg)^{1/2}
}\notag \\ 
& {
\lesssim \ctL \sum_{k=l}^{l+1}\sum_{w=0}^k\sum_{j=0}^{M-1}\sum_{r=0}^{k-w}\Bigg(\int_0^T \left\|\partial_{t^w}(u-v)
|\mathcal{K}^{0}_{2,a}(D)\right\|^2 }\notag \\
& {
\sum_{\kappa_1+\ldots+\kappa_r\leq j, \atop \kappa_1+2\kappa_2+\ldots+r\kappa_r=r}
\left\|u |\mathcal{K}^{2m}_{2,a+2m}(D)\right\|^{2(j-(\kappa_1+\ldots+\kappa_r))}
\prod_{i=0}^r \left\| \partial_{t^{{i}}}u| \mathcal{K}^{2m}_{2,a+2m}(D)\right\|^{2\kappa_{{i}}}
}\notag \\
& {
\sum_{\kappa_1+\ldots+\kappa_{k-w-r}\leq M-1-j, \atop \kappa_1+2\kappa_2+\ldots+(k-w-r)\kappa_{k-w-r}=k-w-r}
\left\|v |\mathcal{K}^{2m}_{2,a+2m}(D)\right\|^{2(M-1-j-(\kappa_1+\ldots+\kappa_{k-w-r}))} }\notag \\
&{\qquad \qquad 
\prod_{i=0}^{k-w-r} \left\| \partial_{t^{{i}}}v| \mathcal{K}^{2m}_{2,a+2m}(D)\right\|^{2\kappa_{{i}}}
\ud t\Bigg)^{1/2}
}\notag \\
& {
\lesssim \ctL \sum_{k=l}^{l+1}M\Bigg(\int_0^T \left\|\partial_{t^k}(u-v)
|\mathcal{K}^{0}_{2,a}(D)\right\|^2 }\notag \\
& {
\qquad 
\sum_{\kappa_1'+\ldots+\kappa_k'\leq \min\{M-1,k\}, \atop \kappa_k'\leq 1}
\max_{w\in \{u,v\}}\left\|w |\mathcal{K}^{2m}_{2,a+2m}(D)\right\|^{2(M-1-(\kappa_1'+\ldots+\kappa_k'))}}\notag \\
& {\qquad 
\qquad \prod_{i=0}^k  \max\left\{\left\| \partial_{t^{{i}}}u| \mathcal{K}^{2m}_{2,a+2m}(D)\right\|, \left\| \partial_{t^{{i}}}v| \mathcal{K}^{2m}_{2,a+2m}(D)\right\|, 1\right\}^{4\kappa_i'} \ud t\Bigg)^{1/2}
}\label{kappa}\\
&\lesssim M \ctL \cdot 
\sum_{k=l}^{l+1} |u-v|_{H^{k}([0,T],\mathcal{K}^{0}_{2,a}(D))}\cdot \notag\\
& \qquad \max_{w\in \{u,v\}}\max_{r=0,\ldots, l+1} \max \Big(\left\| \partial_{t^r}w |L_{\infty}([0,T],\mathcal{K}^{2m}_{2,a+2m}(D)\right\|,\  
1\Big)^{2({M-1})}.\label{est-4}
\end{align}
We give some explanations concerning the estimate above. 
In \eqref{kappa} we used the fact that in the step before we have two sums with  $\kappa_1+\ldots +\kappa_r\leq j$ and $\kappa_1+\ldots+\kappa_{k-w-r}\leq M-1-j$, i.e., we have $k-w$ different $\kappa_i$'s which leads to at most $k$ different $\kappa_i$'s if $w=0$.  We allow all combinations of $\kappa_i$'s and  redefine the $\kappa_i$'s in the second sum leading to $\kappa_1', \ldots , \kappa_k'$ with $\kappa_1'+\ldots+\kappa_k'\leq M-1$ and replace the old conditions $\kappa_1+\ldots +r\kappa_r\leq r$ and $\kappa_1+\ldots +(k-w-r)\kappa_{k-w-r}\leq k-w-r$ by the weaker ones $\kappa_1'+\ldots+\kappa_k'\leq k$ and $\kappa_k'\leq 1$. This causes no problems since the other terms appearing in this step do not depend on $\kappa_i$ apart from the product term. There, the fact that some of the old $\kappa_i$'s from both sums might coincide is reflected in the new exponent $4\kappa_i'$.   
From Theorem \ref{thm-sob-emb} (Sobolev's embedding theorem)  we conclude that

\begin{eqnarray}
u,v \in S&\hookrightarrow & {H}^{{r+1}}([0,T],X)
\hookrightarrow  {C}^{r}([0,T],X)
\hookrightarrow  {C}^{r}([0,T],\mathcal{K}^{2m}_{2,a+2m}(D)), \notag
\end{eqnarray}
hence the term  involving  $\max_{r=0,\ldots, l+1}(\ldots)^{2(M-1)}$ in \eqref{est-4} is bounded  by  
$$\max_{r=0,\ldots, l+1}(R_{r+1}+|\tilde{L}^{-1}f|_{H^{{r+1}}([0,T],X)}+\max_{w\in \{u,v\}}\|w|L_2([0,T],X)\|,1)^{2(M-1)},$$  since $u$ and $v$ are  taken from  $S_0=\bigcap_{k=0}^{\infty} \overline{B_k^{|\cdot|}(\tilde{L}^{-1}f,R_k)}$.

Thus   \eqref{est-4} yields    
 \begin{align}
I&=\ctL  \sum_{k=l}^{l+1} |u^M-v^M|_{H^{k}([0,T],\mathcal{K}^{0}_{2,a}(D))}\notag\\
&\lesssim M \ctL \max_{r=0,\ldots, l+1}\Bigl(R_{r+1}+|\tilde{L}^{-1}f|_{H^{{r+1}}([0,T],X)}+\max_{w\in \{u,v\}}\|w|L_2([0,T],X)\|,1\Bigr)^{{2(M-1)}} \sum_{k=l}^{l+1}|u-v|_{H^{{k}}([0,T],X)}. \label{est-ball}
\end{align}

We now turn our attention towards the second term 
in \eqref{est-1}. 
Altogether the whole calculation for $II$ is essentially the same as before:  We re-use Leibniz's formula from \eqref{est-2} and   Fa\`{a} di Bruno's formula from \eqref{FaaDiBruno}   in order to estimate the appearing derivatives. Moreover, by replacing $\mathcal{K}^{0}_{2,a}(D)$  by $L_2(D)$, $\mathcal{K}^{2m}_{2,a+2m}(D)$
 by $H^m(D)$, and Corollary \ref{thm-pointwise-mult-2}  by the estimate 
\[
\|uv|L_2(D)\|\leq \|u|C(D)\|\cdot \|v|L_2(D)\|\leq \|u|H^m(D)\|\cdot \|v|L_2(D)\|, 
\]
which is a simple consequence of Sobolev's embedding theorem (note that $m>\frac{3}{2}$ in view of the assumption on $m$), we ultimately obtain that 
 \begin{align}
II&=\ctL \sum_{k=l}^{l+1}|u^M-v^M|_{H^{k}([0,T],L_2(D))}\notag \\
& \lesssim 
\ctL M\max_{r=0,\ldots, l+1}\Bigl(R_{r+1}+|\tilde{L}^{-1}f|_{H^{{r+1}}([0,T],X)}+\max_{w\in \{u,v\}}\|w|L_2([0,T],X)\|,1\Bigr)^{2(M-1)} \notag \\
 & \qquad 
\cdot \sum_{k=l}^{l+1} |u-v|_{H^{k}([0,T],H^m(D))}.  \label{est-ball_a}
\end{align}
We shifted the details for \eqref{est-ball_a} to Appendix \ref{app-proof}  in order to make the presentation clearer.
Now \eqref{est-1} together with \eqref{est-ball} and \eqref{est-ball_a} yields 
\begin{align}\label{est-ab}
|& \tilde{L}^{-1}(u^M-v^M) |_{H^{l}([0,T],X)}\notag\\
 &\lesssim \ctL \sum_{k=l}^{l+1}|u^M-v^M|_{H^{l+1}([0,T],L_2(D)\cap \mathcal{K}^{0}_{2,a}(D))}\notag \\ 
& \lesssim M\ctL\cdot \sum_{k=l}^{l+1}|u-v|_{H^{k}([0,T],X)}\cdot
	\max\Bigl(R_{r+1}+|\tilde{L}^{-1}f|_{H^{{r+1}}([0,T],X)}+\max_{w\in \{u,v\}}\|w|L_2([0,T],X)\|,1\Bigr)^{2(M-1)}\notag\\  
& \lesssim M\ctL \cdot \sum_{k=l}^{l+1}|u-v|_{H^{k}([0,T],X)}\cdot
	\max\Bigl(R_{r+1}+\ctL (\tau_{r+1}+\tau_{r+2})+\max_{w\in \{u,v\}}\|w|L_2([0,T],X)\|,1\Bigr)^{2(M-1)}
\end{align}
where the maximum is taken over $r=0,\ldots, l+1$ and   the last step is a consequence of   \eqref{apriori-est-1} with  $\tau_k:=|f|_{H^{{k}}([0,T],L_2(D)\cap \mathcal{K}^{0}_{2,a}(D))}$, $k\in \nat$. This proves \eqref{est-0} and shows the continuity of  $\tilde{L}^{-1}\circ N: S_0\rightarrow S$. \\

{\em Step 4: } The calculations in Step 2 show that $u^M\in {\mathcal{D}}$: 
The fact that $u^M\in \bigcap_{k=0}^{\infty}\D_{k}$ for $u\in S$  follows from the estimate \eqref{est-ab}.   In particular, taking $v=0$ in \eqref{est-ab} we get an estimate from above for  $|u^M|_{\mathcal{D}_{l+1}}$. The upper bound depends on $|u|_{S_{l}}$, $|u|_{S_{l+1}}$,   and several constants which depend on $u$ but are finite whenever we have $u\in S$, see also \eqref{est-4} and \eqref{est-4a}. The dependence on the radii $R_k$, $k=1,\ldots, l+2$ in \eqref{est-ab} comes from the fact that we choose $u\in S_0$ there. 
However, the same argument can also be applied to an arbitrary $u\in S$; this would result in a different constant $\tilde{c}$. 
In order to have $u^M\in {\mathcal{D}}$, we still need to show that $\mathrm{Tr}\left(\partial_{t^k}u^M\right)=0$ for all  $k\in \nat_0$. This follows from similar arguments as  in   \cite[Thm.~4.13]{DS20}. \textcolor{black}{We omit the full details here and just briefly sketch the main idea}:  
Since $u\in S\hookrightarrow H^{k+1}([0,T], X)\hookrightarrow C^{k}([0,T], X)$ for any $k\in \nat_0$, we see that the trace operator $\mathrm{Tr}\left(\partial_{t^k}u\right):=\left(\partial_{t^k}u\right)(0,\cdot)$ 
is well defined for all $k\in \nat_0$ (independently on  $C^{0}([0,T],X)$).  
Using the initial assumption $u(0,\cdot)=0$ in Problem \ref{prob_nonlin}, 
by density arguments ($C^{\infty}(D_T)$ is dense in $S$)
and  induction we deduce  that $(\partial_{t^k}u)(0,\cdot)=0$ for all $k\in \nat_0$. 
 Moreover, since by Theorem \ref{thm-sob-emb} (Sobolev's embedding theorem) we have 
 \begin{align*}
u^M \in \bigcap_{l=0}^{\infty}\D_{l} &\hookrightarrow H^{k+1}([0,T], L_2(D)) 
\hookrightarrow C^{k}([0,T], L_2(D)), 
\end{align*}
 we see that also the trace operator $\mathrm{Tr}\left(\partial_{t^k}u^M\right):=\left(\partial_{t^k}u^M\right)(0,\cdot)$ is well defined for $k\in \nat_0$ (independently of $C^{0}([0,T],L_2(D))$). By \eqref{est-3a}  the term $\|\left(\partial_{t^k}u^M\right)(0,\cdot)|L_2(D)\|$ is estimated from above by {powers of} $\|\left(\partial_{t^l}u\right)(0,\cdot)|H^m(D)\|$, $l=0,\ldots, k$. 
 Since all these terms are equal to zero, 
 this shows that $u^M\in {\mathcal{D}}$. \\

{\em Step 5: \ } It remains to show that 
$
(\tilde{L}^{-1}\circ N)(S_0) \overset{!}{\subset} S_0$, which holds if  for  all $l\in \nat$  
we have 
\[
|(\tilde{L}^{-1}\circ N)(u)-\tilde{L}^{-1}f|_{H^{l}([0,T],X)}=\varepsilon | \tilde{L}^{-1}u^M|_{H^{l}([0,T],X)}\overset{!}{\leq }R_l. 
\]

We use the fact that  $u\in S_0$ implies 
\begin{equation}\label{cond-Rk} 
|u|_{H^{l}([0,T],X)}\leq R_l+ \ctL \tau_l,\qquad \text{for all } l\in \nat_0. 
\end{equation}
Let $Y=L_2(D)\cap \mathcal{K}^{0}_{2,a}(D)$ and $X=\mathring{H}^m(D)\cap \mathcal{K}^{2m}_{2,a+2m}(D)$. Then 
using \eqref{apriori-est-1} (which requires $\eta=2m$) and the fact that $\|\cdot|L_2([0,T],X)\|+|\cdot|_{H^{l}([0,T],X)}\sim_l \|\cdot| H^{l}([0,T],X)\|$ (where the constants appearing in the equivalence depend on $l$, see Theorem \ref{thm-eq-norm}) yields 

\begin{align}
\varepsilon &| \tilde{L}^{-1}u^M|_{H^{l}([0,T],X)}\notag \\
&\leq \varepsilon \ctL \left[ |u^M|_{H^{l+1}([0,T],Y)}+|u^M|_{H^{l}([0,T],Y)}\right]\notag\\
&\leq \varepsilon \ctL \left[ |u^M|_{H^{l+1}([0,T],X)}+|u^M|_{H^{l}([0,T],X)}\right]\notag\\
& \sim  \varepsilon \ctL \left[ (\|u^M |{H^{l+1}([0,T],X)}\|- \|u^M|L_2([0,T],X)\|)+(\|u^M| {H^{l}([0,T],X)}\|-  \|u^M|L_2([0,T],X)\|)\right]\notag\\
& \leq   2 \varepsilon \ctL \left[ (\|u^M |{H^{l+1}([0,T],X)}\|- \|u^M|L_2([0,T],X)\|)\right]\notag\\
&\leq   2 \varepsilon \ctL \left[ (c_l\|u  |{H^{l+1}([0,T],X)}\|^M- \|u^M|L_2([0,T],X)\|)\right]\notag\\
&\lesssim   2 \varepsilon \ctL \left[ c_l\big(|u  |_{H^{l+1}([0,T],X)}+\|u|L_2([0,T],X)\|\big)^M - \|u^M|L_2([0,T],X)\|\right].\label{est-p-1}
\end{align}
Here in the 5th step we used  the fact that  $H^k([0,T],X)=H^k([0,T],\mathring{H}^m(D)\cap \mathcal{K}^{\eta}_{2,a+2m}(D))$ is a multiplication algebra for any $k\geq \frac 12$ (this follows from \cite[Cor.~VII 6.2.4, Thm.~VII 7.3.4]{Ama19} together with \cite[Cor.~5.11]{DHSS20} and the fact that the interval $[0,T]$ has dimension $1$). 

 Thus to ensure the mapping properties for $\tilde{L}^{-1}\circ N$, combining \eqref{est-p-1} with \eqref{cond-Rk}, we require
\begin{align}
&\varepsilon | \tilde{L}^{-1}u^M|_{H^{l}([0,T],X)}\notag\\
&\lesssim 2\varepsilon \ctL c_l\left[  |u|_{H^{l+1}([0,T],X)} + \|u |L_2([0,T],X)\|^M\right] \notag\\
&\leq 2\varepsilon \ctL c_l\left[ (R_{l+1}+\ctL \tau_{l+1}) + (R_0+\ctL \tau_0)\right]^M\ \overset{!}{\leq} \ R_l. \label{estim-p-2} 
\end{align}

Now fix some arbitrary $N\in\nat$, and choose
$$R_l=\tau_l c_{\tilde L^{-1}}\quad\text{for all}\quad l\leq  N,\qquad
	R_l=0\quad \text{for all}\quad l> N\,.$$
Then for $l\leq N-1$ \eqref{estim-p-2}  simplifies to a recurrence relation for $R_l$,
$$R_l=(R_{l+1}+R_0)^M\cdot D(l)\,,$$
where we put $D(l):=2^{M+1}\varepsilon c_{\tilde L^{-1}}c_l$, together with
$$R_N=R_0^M D(N)\,.$$
This recurrence relation leads to
\begin{align*}
    R_{N-1}&=D(N-1)(R_N+R_0)^M\\
    &=D(N-1)\left(D(N)R_0^M+R_0\right)^M\\
    R_{N-2}&=D(N-2)(R_{N-1}+R_0)^M\\
    &=D(N-2)\left(D(N-1)(D(N)R_0^M+R_0)^M+R_0\right)^M\\
    &=:P_{N-2}(R_0),  \\
    & \ \ \vdots 
\end{align*}
where $P_{N-2}(R_0)$ is a polynomial in $R_0$ 
and ultimately
$$R_0=P_0(R_0)\,.$$
From this we see that $R_0$ has to be chosen as zero point of the polynomial 
\[
g(x):=P_0(x)-x.
\]
This is possible: Since   $g(0)=0$ and $g(x)\rightarrow \infty$ as $x\rightarrow \infty$ (all coefficients of $P_0$ are positive),  by choosing $\varepsilon>0$ small enough we can ensure $g'(0)<0$. From this it follows  that  there must exist some $R_0>0$ such that $g(R_0)=0$.  \\
In conclusion, from our procedure above    for any $N\in \nat$ chosen arbitrarily large, we obtain a well-defined sequence of radii $R_0, \ldots, R_N\geq 0$. 
\end{proof}

\remark{We are aware that the conditions in Theorem \ref{nonlin-B-reg1} above  are somewhat  restrictive. 
This is of course not ideal, however, this fact can not be avoided due to the proof techniques that we are using. Moreover, we believe that with the techniques used here, the results  obtained display the best possible outcome.  
In order to improve the results even further one would have to apply completely new proof techniques which to our knownledge are unknown so far. In particular, let us emphasize the following points in this context: 
\begin{itemize}
\item[(i)] The restriction $m\geq 2$ in Theorem \ref{nonlin-B-reg1} comes from the fact that we require $s=m>\frac d2=\frac 32$ in \eqref{multiplier-lim}. This assumption can probably be weakened, since we expect   the solution to satisfy  $u\in L_2([0,T], H^{s}(D))$ for all $s<\frac 32$, see also \cite[Rem.  5.3]{DS20} and the explanations given there.\\ 
Moreover, the restriction  $a\geq -\frac 12$ in  Theorem \ref{nonlin-B-reg1} comes from Theorem \ref{thm-pointwise-mult-2}(ii) that we applied. 
Together with the restriction $a\in [-m,m]$ we are looking for  $a\in [-\frac 12,m]$ if the domain is a corner domain, e.g.  a smooth cone $K\subset \real^3$ (subject to some truncation). For polyhedral cones with edges $M_k$, $k=1,\ldots, l$,  we   furthermore require $-\delta_-^{(k)}<a+m<\delta^{(k)}_+$  from Theorem \ref{thm-weighted-sob-reg-2}. 
\item[(ii)] Starting with \eqref{estim-p-2} and the same choice of $R_l$, i.e., $R_l=\tau_l \ctL$ for all $l\in \nat$, the more natural approach seems to be to solve \eqref{estim-p-2} for $R_{l+1}$, to get the estimate
\begin{equation}\label{estim-p-3}
    R_{l+1}\leq \frac 12\left[\frac{R_l}{2\varepsilon\ctL c_l}\right]^{1/M}-R_0\,,  
\end{equation}
i.e., a sequence of restrictions for $R_l$ (and thus in turn for $\tau_l=|f|_{\mathcal{D}_l}$) for all $l\in\nat$. The right-hand side of this condition needs to be strictly positive. However, choosing some sufficiently small $\varepsilon>0$, depending on (sufficiently small) $R_0$ cannot guarantee that the right-hand side of those conditions remains non-negative for all $l\in\nat$, as the precise dependencies of $c_{\tilde L^{-1}}$ and $c_l$ are not known. 
This in turn  means that in this case all higher seminorms (starting with the index $l$ corresponding to the first negative radius) would have to disappear and we end up with solutions that are polynomial perturbations of the solution to the linear system. However, such 
solutions can only exist for relatively short time intervals because  due to the smoothing property the
solutions  must decay, but polynomials do
grow beyond all limits. The result would then be relatively weak. \\
\end{itemize}
}

\section{Regularity results in Besov spaces}
\label{sect-besov-reg}

Based on the work done in Section \ref{sect-reg-sob-kon}, in this section we finally come to the presentation of the regularity results in Besov spaces for Problem \ref{prob_nonlin}. 
It turns out  that in all cases studied  the Besov regularity is higher than the Sobolev regularity (by factor $3$).  This indicates that adaptivity pays off when solving these problems numerically.  \\
Combining  Theorem \ref{nonlin-B-reg1} (Nonlinear  Kondratiev regularity) with the embeddings from Theorem \ref{thm-hansen-gen}  we derive the following result.

\begin{theorem}[{\bf Nonlinear Besov regularity}]\label{nonlin-B-reg3}
Let the assumptions of Theorems \ref{nonlin-B-reg1}  and  \ref{thm-weighted-sob-reg-2} be satisfied.  
In particular, as in Theorem \ref{nonlin-B-reg1} for $r_{\infty}:=\sup_{l\in \nat_0}\|f|\D_l\|<\infty$ and $r_0>1$,  we choose $\varepsilon >0$ so small that 
\begin{equation}\label{nonlin-cond1}
r_{\infty}^{M-1} \|\tilde{L}^{-1}\|^{M}\leq  \frac{r_0-1}{\varepsilon\cdot r_0^{M}}.  
\end{equation}
\begin{figure}[H]
\begin{minipage}{0.55\textwidth}
{\em  
Furthermore,  for $\alpha>0$ put 
$$B:=\bigcap_{k=0}^{\infty}H^k([0,T],B^{\alpha}_{\tau,\infty}(D)),   \qquad \frac{1}{\tau}=\frac{\alpha}{3}+\frac 12, 
$$
and define  $B_0$ as the intersection of small balls  around $\tilde{L}^{-1}f$ (the solution of the corresponding linear problem) with radius $R={C}(r_0-1)r_{\infty} \|\tilde{L}^{-1}\|>0$, i.e., 
\begin{equation}\label{def_B0}
B_0:=\bigcap_{k=0}^{\infty}\tilde{B}_{k+1}(\tilde{L}^{-1}f,R), 
\end{equation}
}
\end{minipage}\hfill \begin{minipage}{0.38\textwidth}
\includegraphics[width=8.5cm]{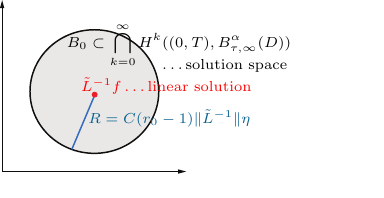}		
\caption[Nonlinear solution in  ball]{Nonlinear solution in $B_0$}
\end{minipage}
\end{figure}
where $\tilde{B}_{k+1}(\tilde{L}^{-1}f,R)$ denotes the closed ball in $H^k([0,T],B^{\alpha}_{\tau,\infty}(D))$. 
Then there exists a solution $u$ of Problem \ref{prob_nonlin}, which  satisfies 
 $$ u\in B_0\subset B \qquad 
\text{for all} \qquad 0<\alpha<\min\left(\frac{3}{\delta}m,\gamma\right),$$ 
where $\delta$ denotes the dimension of the singular set of $D$. 
\end{theorem}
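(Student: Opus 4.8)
The plan is to transfer the Kondratiev-space regularity from Theorem~\ref{nonlin-B-reg1} to the Besov scale \eqref{adaptivityscale} via the embedding of Theorem~\ref{thm-hansen-gen}, working separately on each time-derivative level and then bundling everything into the locally convex space $B=\bigcap_k H^k([0,T],B^{\alpha}_{\tau,\infty}(D))$. First I would invoke Theorem~\ref{nonlin-B-reg1}: under the stated hypotheses (in particular $m\geq 2$, $a\geq-\tfrac12$, and the smallness condition \eqref{nonlin-cond1} on $\varepsilon$) there exists a solution $u\in S_0\subset S=\bigcap_{k=0}^{\infty}H^k([0,T],\calk^{\eta}_{2,a+2m}(D)\cap\mathring H^m(D))$ of Problem~\ref{prob_nonlin}, lying in the intersection of balls $B_{k+1}(\tilde L^{-1}f,R_S)$ of radius $R_S=(r_0-1)\tilde\eta\|\tilde L^{-1}\|$ around the linear solution $\tilde L^{-1}f$. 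So I already have, for every $k\in\nat_0$, that $u\in H^k([0,T],\calk^{\eta}_{2,a+2m}(D))$ with a quantitative bound on the distance to $\tilde L^{-1}f$.

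Next I would apply the Kondratiev--Besov embedding. Set $p=2$ and $\gamma$ an integer with $0\leq\gamma<\min(\eta,\tfrac{3}{\delta}s)$ for a suitable auxiliary smoothness $s$, and $a+2m>\tfrac{\delta}{3}\gamma$; Theorem~\ref{thm-hansen-gen} then gives the continuous embedding
\[
H^k([0,T],\calk^{\eta}_{2,a+2m}(D))\cap H^k([0,T],B^s_{2,2}(D))\hookrightarrow H^k([0,T],B^{\gamma}_{\tau,\tau}(D)),\qquad \tfrac1\tau=\tfrac\gamma3+\tfrac12,
\]
for each $k$. The second intersectand is supplied by the Sobolev regularity: since $u\in H^k([0,T],\mathring H^m(D))\hookrightarrow H^k([0,T],B^m_{2,2}(D))$, one may take $s=m$ (or any $s\leq m$). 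Since $B^{\gamma}_{\tau,\tau}(D)\hookrightarrow B^{\gamma}_{\tau,\infty}(D)$ and, for $\alpha<\gamma$, $B^{\gamma}_{\tau,\infty}(D)\hookrightarrow B^{\alpha}_{\tau,\infty}(D)$, I obtain for every $k\in\nat_0$ a bounded embedding $S_k\cap H^k([0,T],B^m_{2,2}(D))\hookrightarrow H^k([0,T],B^{\alpha}_{\tau,\infty}(D))$ with a constant $C_k$; this places $u$ in each $H^k([0,T],B^{\alpha}_{\tau,\infty}(D))$, hence in $B$, for any $0<\alpha<\min(\tfrac3\delta m,\gamma)$. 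Letting $\gamma$ run up to (just below) $\min(\eta,\tfrac3\delta m)$ and recalling $\eta\geq 2m$ so that $\eta$ is not the binding constraint for the ranges of interest, one recovers the claimed range $0<\alpha<\min(\tfrac3\delta m,\gamma)$.

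Finally, to land the solution inside the ball $B_0$ from \eqref{def_B0} rather than merely in $B$, I would push the embedding estimate through the difference $u-\tilde L^{-1}f$. Since $u$ solves $\tilde L u=f-\varepsilon u^M$ while $\tilde L^{-1}f$ solves the linear problem, we have $u-\tilde L^{-1}f=-\varepsilon\tilde L^{-1}u^M$, and by Step~3 of the proof of Theorem~\ref{nonlin-B-reg1} (with the estimate \eqref{est-ab} applied with $v=0$) together with the boundedness of $\tilde L^{-1}:\D_{l+1+\eta-2m}\to S_l$, the Kondratiev-norm distance $\|u-\tilde L^{-1}f\,|\,H^k([0,T],X)\|$ is controlled by $R_S=(r_0-1)\tilde\eta\|\tilde L^{-1}\|$. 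Composing with the embedding constant from Theorem~\ref{thm-hansen-gen} gives $\|u-\tilde L^{-1}f\,|\,H^k([0,T],B^{\alpha}_{\tau,\infty}(D))\|\leq C R_S=:R$, which is exactly the radius $R=C(r_0-1)\tilde\eta\|\tilde L^{-1}\|$ appearing in the statement; hence $u\in\tilde B_{k+1}(\tilde L^{-1}f,R)$ for every $k$, i.e.\ $u\in B_0$. The main obstacle I anticipate is purely bookkeeping: verifying that the parameter constraints of Theorem~\ref{thm-hansen-gen} ($\gamma<\min(m,\tfrac3\delta s)$, $a+2m>\tfrac\delta3\gamma$, integrality of $\gamma$) are simultaneously compatible with the assumed ranges $a\in[-m,m]$, $a\geq-\tfrac12$, $m\geq 2$, $\eta\geq 2m$ and still allow $\alpha$ up to $\min(\tfrac3\delta m,\gamma)$ — in particular checking that the weight $a+2m$ is large enough relative to $\delta\gamma/3$ in both the edge case $\delta=1$ and the corner case $\delta=0$, and dealing with the fact that the Besov embedding is stated for \emph{integer} $\gamma$ while the target smoothness $\alpha$ is an arbitrary real below the threshold (handled by the elementary monotonicity embedding $B^{\gamma}_{\tau,\infty}\hookrightarrow B^{\alpha}_{\tau,\infty}$ for $\alpha<\gamma$).
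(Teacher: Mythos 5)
Your proposal is correct and follows essentially the same route as the paper: invoke Theorem \ref{nonlin-B-reg1} to obtain the fixed point $u\in S_0$, then push the bound $\|u-\tilde{L}^{-1}f\,|\,H^k([0,T],\mathcal{K}^{\eta}_{2,a+2m}(D)\cap H^m(D))\|\leq (r_0-1)\tilde{\eta}\|\tilde{L}^{-1}\|$ through the embedding of Theorem \ref{thm-hansen-gen} to land in the balls $\tilde{B}_{k+1}(\tilde{L}^{-1}f,R)$ with $R=C(r_0-1)\tilde{\eta}\|\tilde{L}^{-1}\|$. You are in fact more explicit than the paper about the bookkeeping (choice $s=m$, $p=2$, the integer-$\gamma$ versus real-$\alpha$ issue, and compatibility of $a+2m>\frac{\delta}{3}\gamma$ with the assumed parameter ranges), and your detour via $u-\tilde{L}^{-1}f=-\varepsilon\tilde{L}^{-1}u^M$ is unnecessary but harmless, since $u\in S_0$ already gives the needed Kondratiev--Sobolev bound directly.
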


\begin{proof}
This is a consequence of the regularity results in Kondratiev and Sobolev spaces from Theorem  \ref{nonlin-B-reg1}. To be more precise, Theorem  \ref{nonlin-B-reg1} establishes the existence of a fixed point $u$ in  
\begin{align*}
S_0\subset S
&:= \bigcap_{k=0}^{\infty}H^{k}([0,T],\mathring{H}^m(D)\cap\mathcal{K}^{\eta}_{2,a+2m}(D)).
\end{align*}
This together with the embedding results for Besov spaces from Theorem  \ref{thm-hansen-gen} 
completes the proof, since for all $k\in \nat$,  
\begin{align}
\| u&-\tilde{L}^{-1}f |H^k([0,T],B^{\alpha}_{\tau,\infty}(D))\|\notag \\
&\leq  C \| u- \tilde{L}^{-1}f| H^k([0,T], \mathcal{K}^{\eta}_{2,a+2m}(D)\cap H^m(D))\|
\leq  C (r_0-1)r_{\infty}\|\tilde{L}^{-1}\|=R. \label{est-abc}
\end{align} 
Furthermore,  it can be seen from \eqref{est-abc} that the new constant $C$ appears when considering the radius $R$ around the linear solution where the problem can be solved compared to Theorem \ref{nonlin-B-reg1}.
\end{proof}

\remark{A few words concerning the parameters appearing in Theorem \ref{nonlin-B-reg3} (and also Theorem  \ref{nonlin-B-reg1}) seem to be in order. 
Usually, the operator norm  $\|\tilde{L}^{-1}\|$ as well as  $\varepsilon$ are fixed; but we can change $r_{\infty}$ and $r_0$ according to our needs. From this we deduce that by choosing $r_{\infty}$ small enough the  {condition \eqref{nonlin-cond1} can always be satisfied.} Moreover, it is easy to see that the smaller the nonlinear perturbation $\varepsilon>0$ is, the larger we can choose the radius $R$ of the ball $B_0$. 
}

Finally, combining Theorem \ref{nonlin-B-reg3} with Theorem \ref{thm-sob-emb} gives the following result. 

\begin{corollary}[H\"older-Besov regularity]
Let the assumptions of Theorem \ref{nonlin-B-reg3} be satisfied and let $B_0$ be defined as in \eqref{def_B0}.  
Then there exists a solution $u$ of Problem \ref{prob_nonlin}, which  satisfies 
 $$ u\in B_0\subset C^{\infty}([0,T],B^{\alpha}_{\tau,\infty}(D))\qquad  
\text{for all} \qquad 0<\alpha<\min\left(\frac{3}{\delta}m,\gamma\right),$$ 
where $\delta$ denotes the dimension of the singular set of $D$. 
\end{corollary}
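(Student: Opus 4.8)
The plan is to simply chain together the Besov regularity theorem just proved with the generalized Sobolev embedding for Banach-space valued functions. Concretely, Theorem \ref{nonlin-B-reg3} already produces a solution $u$ of Problem \ref{prob_nonlin} lying in
\[
B_0\subset B=\bigcap_{k=0}^{\infty}H^k([0,T],B^{\alpha}_{\tau,\infty}(D)),\qquad \tfrac{1}{\tau}=\tfrac{\alpha}{3}+\tfrac12,
\]
for every $0<\alpha<\min\bigl(\tfrac{3}{\delta}m,\gamma\bigr)$. So the only thing left to establish is the temporal smoothness statement, namely that membership in $H^k([0,T],B^{\alpha}_{\tau,\infty}(D))$ for \emph{all} $k\in\nat_0$ upgrades to $C^{\infty}([0,T],B^{\alpha}_{\tau,\infty}(D))$.

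First I would fix the target Besov space as the Banach space $X_B:=B^{\alpha}_{\tau,\infty}(D)$ (recall that, by the remark following Theorem \ref{thm-hansen-gen}, for the parameter range in question these Besov spaces are genuine Banach spaces in the distributional setting). Then, for each $k\in\nat$, apply the Generalized Sobolev Embedding Theorem \ref{thm-sob-emb} with $p=2$, $m=k+1$, $I=[0,T]$ and $X=X_B$, giving
\[
W^{k+1}_2([0,T],X_B)=H^{k+1}([0,T],X_B)\hookrightarrow \mathcal{C}^{k,1/2}([0,T],X_B)\hookrightarrow C^{k}([0,T],X_B).
\]
Since $u\in H^{k+1}([0,T],X_B)$ for every $k$ by Theorem \ref{nonlin-B-reg3}, we conclude $u\in C^{k}([0,T],X_B)$ for every $k\in\nat_0$, i.e.\ $u\in\bigcap_{k=0}^{\infty}C^k([0,T],X_B)=C^{\infty}([0,T],X_B)$. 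This gives the asserted inclusion $u\in C^{\infty}([0,T],B^{\alpha}_{\tau,\infty}(D))$, and the containment $u\in B_0$ is inherited verbatim from Theorem \ref{nonlin-B-reg3}, so $B_0\subset C^{\infty}([0,T],B^{\alpha}_{\tau,\infty}(D))$ as claimed, for the same range $0<\alpha<\min\bigl(\tfrac{3}{\delta}m,\gamma\bigr)$.

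There is essentially no obstacle here; this corollary is a soft consequence of the two cited theorems. The only point requiring a word of care is making sure that Theorem \ref{thm-sob-emb} is applicable, which needs the underlying space to be a Banach space and $1<p<\infty$ — both are satisfied with $X=B^{\alpha}_{\tau,\infty}(D)$ (a Banach space for the admissible parameters, cf.\ the remark after Theorem \ref{thm-hansen-gen}) and $p=2$. One should also note that the intersection over all $k$ of the embeddings is exactly what turns "$H^k$ for all $k$" into "$C^\infty$", so it is crucial that Theorem \ref{nonlin-B-reg3} delivers the full intersection $B=\bigcap_{k\ge0}H^k([0,T],B^{\alpha}_{\tau,\infty}(D))$ rather than finitely many levels of regularity — which it does.
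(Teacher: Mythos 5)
Your proof is correct and is exactly the argument the paper intends: the paper offers no separate proof of this corollary beyond the remark that it follows by combining Theorem \ref{nonlin-B-reg3} with the generalized Sobolev embedding Theorem \ref{thm-sob-emb}, which is precisely your chain $H^{k+1}([0,T],X)\hookrightarrow \mathcal{C}^{k,1/2}([0,T],X)\hookrightarrow C^{k}([0,T],X)$ intersected over all $k$. The one point to be careful about (which the paper also glosses over) is that for $\alpha>3/2$ one has $\tau<1$, so $B^{\alpha}_{\tau,\infty}(D)$ is only a quasi-Banach space rather than a Banach space as your justification asserts; Theorem \ref{thm-sob-emb} as stated assumes $X$ Banach, so strictly speaking one needs its (routine) quasi-Banach analogue there.
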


\appendix

\section{Locally convex spaces}\label{app-A}

 A locally convex topological vector space  (denoted as LC-space in the sequel) is defined to be a vector space $X$ along with a family $\mathcal{P}_0$ of seminorms on $X$. 
This family of seminorms  induces a canonical vector space topology on $X$, called the initial topology, turning it into a topological vector space. By definition, it is the coarsest topology on $X$ for which all maps in $\mathcal{P}_0$ are continuous.
In particular, the balls $B_p(x,r):=\{y\in X: \ p(x-y)<r\}$, where $x\in X$, $r>0$, and  $p\in \mathcal{P}_0$ constitute a subbasis of the topology.  
 Convergence in LC-spaces can be defined via the seminorms, i.e., a sequence $\{x_i\}_i$ converges in the underlying topology of a LC-space  towards $x$ if, and only if, $p(x_i-x)\xrightarrow{i\rightarrow \infty} 0$ for all $p\in \mathcal{P}_0$. 

In this paper we especially deal with the LC-spaces 
\[
X=\bigcap_{k=0}^{\infty}H^k_{|\cdot|}([0,T],\tilde{X}),  \qquad \tilde{X} \ \text{some quasi-Banach space}, 
\]
which are Hausdorffsch and for which the corresponding family of seminorms $p_k(\cdot)=|\cdot|_{H_k([0,T],\tilde{X})}$ is countable. 
They are  examples of  Fr\'echet-spaces since they are  complete (which follows from the fact that a Cauchy sequence in $X$ is a Cauchy sequence  w.r.t. any of the seminorms, thus, also in $H^k$ for any $k$; therefore,  by completeness of the  Sobolev spaces the sequence   converges for every $k$, and its unique limit again belongs to $X$).  Moreover, 
these kinds of spaces are metrizable by setting 
\[
d(x,y):=\sum_{k\in \nat}\frac{1}{2^k}\frac{p_k(x-y)}{1+p_k(x-y)}, \qquad x,y\in X.
\]
In particular, they satisfy the first axiom of countability, i.e., every point has a countable neighbourhood basis (local base).  We want to investigate nonlinear mappings on these spaces and are interested in their continuity. For general topological spaces continuous maps are defined via open neighbourhoods as follows: 
 Let $X$ and $Y$ be topological spaces and let $f:X\rightarrow Y$. 
Then $f$ is continuous if the pre-image $f^{-1}(V)$ of every open set $V$ in $Y$
 is an open subset of $X$. 
 Since our LC-spaces satisfy the first axiom of countability we additionally have the following results: 
 \bit 
 \item {\bf (Continuity $=$ Sequential continuity)} A mapping $f: X\rightarrow Y$  is continuous if, and only if, it is sequentially continuous, i.e., for each convergent sequence $x_i\rightarrow a$ also the image sequence $f(x_i)$ converges to $f(a)$. 
 \item {\bf (Compact $=$ Sequential compact)} A subset $A\subset X$ (in a metric space $X$) is compact, if and only if, it is sequentially compact, i.e., every sequence in $A$ has a convergent subsequence. 
 \eit We refer to \cite[pp.~103--105]{Jan99} or \cite[Thm.~1.10(b)]{Dobro} in this context.
  \begin{rem}\label{comp-seqcomp}
   Let us point out here that in view of compactness for the 'single' half-normed (and therefore pseudo-metric) spaces $H^k_{|.|}([0,T],\tilde{X})$ the following holds: If $K\subset H^k_{|.|}([0,T],\tilde{X})$ is compact, then it is also sequential compact. This follows from \cite[p.~104]{Jan99}, since   a pseudo-metric space satisfies the first axiom of countability. 
 \end{rem}

\section{Details for the proof of Theorem \ref{nonlin-B-reg1}}
\label{app-proof}

In this appendix we show how to obtain estimate  \eqref{est-ball_a} in the proof of  Theorem \ref{nonlin-B-reg1}: 

We consider the second term 
in \eqref{est-1} and calculate 

\begin{align}
II&= c_{\tilde{L}^{-1}} \sum_{k=l}^{l+1}|u^M-v^M|_{H^{k}([0,T], L_2(D))} \notag\\
&= c_{\tilde{L}^{-1}}\sum_{k=l}^{l+1}\left|(u-v)\sum_{j=0}^{M-1} u^jv^{M-1-j}\right|_{H^{k}([0,T],L_2(D))}\notag\\
&\lesssim c_{\tilde{L}^{-1}}\sum_{k=l}^{l+1}\left\|\partial_{t^k}\left[(u-v)\sum_{j=0}^{M-1} u^jv^{M-1-j}\right]\Big|L_2(D_T)\right\| \notag\\
&= c_{\tilde{L}^{-1}}\sum_{k=l}^{l+1}\left\|\sum_{w=0}^k{k \choose w}\partial_{t^w}(u-v)\cdot \right. 
\left.\left[\sum_{j=0}^{M-1} \sum_{r=0}^{k-w} {{k-w}\choose r} \partial_{t^{r}}u^j \cdot \partial_{t^{k-w-r}}v^{M-1-j}\right]\Big|L_2(D_T)\right\| \notag\\
&\lesssim  c_{\tilde{L}^{-1}}\sum_{k=l}^{l+1}\left\|\sum_{w=0}^k|\partial_{t^w}(u-v)|\cdot \right.
\left.\left[\sum_{j=0}^{M-1} \sum_{r=0}^{k-w}  |\partial_{t^{r}}u^j \cdot \partial_{t^{k-w-r}}v^{M-1-j}|\right]\Big|L_2(D_T)\right\|, 
\label{est-1a}
\end{align}
where we re-used Leibniz's formula from \eqref{est-2} (since this is a pointwise estimate) in the second but last line. Then again  Fa\`{a} di Bruno's formula, cf. \eqref{FaaDiBruno}, is applied  in order to estimate the derivatives in \eqref{est-1a}.

We use the following multiplier result which is a simple consequence of Sobolev's embedding theorem, 
\begin{equation}\label{multiplier-lim}
\|uv|L_2(D)\|\leq \|u|C(D)\|\cdot \|v|L_2(D)\| \leq \|u|H^m(D)\|\cdot \|v|L_2(D)\|,     
\end{equation}
which holds for  $m\geq 2$. 
With this we obtain 
\begin{align}
\Big\|  \partial_{t^r}u^j  | L_2(D)\Big\| 
& \leq  c_{r,j}\left\|\sum_{\kappa_1+\ldots +\kappa_r\leq j} u^{j-(\kappa_1+\ldots +\kappa_r)}\prod_{{i}=1}^r \left|\partial_{t^{{i}}}u\right|^{\kappa_{{i}}}\Big| L_2(D)\right\| \notag\\
&\lesssim \sum_{\kappa_1+\ldots +\kappa_r\leq j} \left\| u| H^m(D)\right\|^{j-(\kappa_1+\ldots +\kappa_r)} \prod_{{i}=1}^{r-1} \left\| \partial_{t^{{i}}}u| H^m(D)\right\|^{\kappa_{{i}}} \left\| \partial_{t^r}u| L_2(D)\right\|^{\kappa_r}.  \label{est-3a}
\end{align}
Similar  for $\partial_{t^{k-w-r}}v^{M-1-j}$. As before, from \eqref{cond-kr}  we observe $\kappa_{r}\leq 1$, therefore the highest derivative $u^{(r)}$ appears at most once.  Note that since $H^m(D)$ is a multiplication algebra for $m> \frac 32$, we get \eqref{est-3a} with $L_2(D)$ replaced by $H^m(D)$ as well.  
Now {\eqref{multiplier-lim} and \eqref{est-3a}  inserted in \eqref{est-1a}} give  
\begin{align}
II&=c_{\tilde{L}^{-1}} \sum_{k=l}^{l+1}|u^M-v^M|_{H^{k}([0,T], L_2(D))}\notag\\
& {
\lesssim  c_{\tilde{L}^{-1}}\sum_{k=0}^{l+1}\Bigg(\int_0^T\left\|\partial_{t^k}(u-v)\sum_{j=0}^{M-1}u^jv^{M-1-j}\Big| L_2(D)\right\|^2 \ud t\Bigg)^{1/2}
}\notag\\
& {
\lesssim c_{\tilde{L}^{-1}}\sum_{k=l}^{l+1}\sum_{w=0}^k\Bigg(\int_0^T\bigg\{\left\|\partial_{t^w}(u-v)|H^m(D)\right\|^2 }\notag\\
& \qquad\qquad  {\sum_{j=0}^{M-1}\sum_{r=0}^{k-w}\left\|\partial_{t^r}u^j|H^m(D)\|^2 \|\partial_{t^{k-w-r}}v^{M-1-j}| H^m(D)\right\|^2 }
\bigg\}\ \ud t\Bigg)^{1/2}\notag \\
&{\lesssim c_{\tilde{L}^{-1}}\sum_{k=l}^{l+1}\sum_{w=0}^k\Bigg(\int_0^T\left\|\partial_{t^w}(u-v)| H^m(D)\right\|^2 \cdot }\notag \\
& {\qquad \qquad \sum_{j=0}^{M-1} \sum_{r=0}^{k-w}\sum_{\kappa_1+\ldots +\kappa_{r}\leq j, \atop \kappa_1+2\kappa_2+\ldots+ r\kappa_{r}\leq r}
  \left\| u| H^m(D)\right\|^{2(j-(\kappa_1+\ldots +\kappa_{r}))}} 
\prod_{{i}=1}^{r} \left\| \partial_{t^{{i}}}u| H^m(D)\right\|^{2\kappa_{{i}}} \notag \\ 
& \qquad   \sum_{\kappa_1+\ldots +\kappa_{k-w-r}\leq M-1-j, \atop \kappa_1+2\kappa_2+\ldots+(k-w-r)\kappa_{k-w-r}\leq k-w-r} 
  \left\| v| H^m(D)\right\|^{2(M-1-j-(\kappa_1+\ldots +\kappa_{k-w-r}))} 
\prod_{{i}=1}^{w-k-r} \left\| \partial_{t^{{i}}}v| H^m(D)\right\|^{2\kappa_{{i}}}
\ud t\Bigg)^{1/2}
\notag\\
&{\lesssim c_{\tilde{L}^{-1}}\sum_{k=l}^{l+1}\Bigg(\int_0^T\left\|\partial_{t^k}(u-v)| H^m(D)\right\|^2 \cdot }\notag \\
& {\qquad  M \sum_{\kappa_1'+\ldots +\kappa_{k}'\leq \min\{M-1,k\}}
  \max_{w\in \{u,v\}}\left\| w| H^m(D)\right\|^{2(M-1-(\kappa_1'+\ldots +\kappa_{k}'))}} 
\max\Big(\prod_{{i}=1}^{k} \left\| \partial_{t^{{i}}}w| H^m(D)\right\|^{4\kappa_{i}'}, 1\Big)
\ud t\Bigg)^{1/2}
\notag\\
&\lesssim c_{\tilde{L}^{-1}}M \sum_{k=l}^{l+1}
|u-v|_{H^{k}([0,T],H^m(D))}\notag \\
&\qquad \max_{w\in \{u,v\}}\max_{{r}=0,\ldots, l+1} \max 
\left(
\left\| \partial_{t^{{r}}}w|L_{\infty}([0,T],H^m(D))\right\|,\   1\right)^{{{2(M-1)}}}.\notag\\ \label{est-4a}
\end{align}
For the re-definition of the $\kappa_i$'s in the second but last line in \eqref{est-4a} we refer to the explanations given after \eqref{est-4}. 
From Theorem \ref{thm-sob-emb} we see that 
\begin{eqnarray}
u,v \in S&\hookrightarrow &H^{r+1}([0,T],X)
 \hookrightarrow  {C}^{r}([0,T],{\mathring{H}^m}(D)), 
\label{est-4aa}
\end{eqnarray}
hence the term  $\max_{r=0,\ldots, l+1}\max(\ldots)^{2(M-1)}$ in \eqref{est-4a}  is bounded  by  
$\max_{r=0,\ldots, l+1}(R_{r+1}+|\tilde{L}^{-1}f|_{H^{{r+1}}([0,T],X)}+\max_{w\in \{u,v\}}\|w|L_2([0,T],X)\|,1)^{2(M-1)}$,  since $u$ and $v$ are  taken from  $S_0$. 

In conclusion we obtain from \eqref{est-4a} {and \eqref{est-4aa}},  
 \begin{align*}
II&= c_{\tilde{L}^{-1}} \sum_{k=l}^{l+1}|u^M-v^M|_{H^{k}([0,T], L_2(D))}\notag \\
& \lesssim 
c_{\tilde{L}^{-1}} M\max_{r=0,\ldots, l+1}(R_{r+1}+|\tilde{L}^{-1}f|_{H^{{r+1}}([0,T],X)}+\max_{w\in \{u,v\}}\|w|L_2([0,T],X)\|,1)^{2(M-1)} \notag \\
 & \qquad 
\cdot \sum_{k=l}^{l+1}
|u-v|_{H^{k}([0,T],H^m(D))}
\end{align*}
which shows  \eqref{est-ball_a}.

\paragraph{Acknowledgement:}   
We want to thank Prof. Dr. Herbert Amann and Prof. Dr. Winfried Sickel  for helpful discussions on the subject.


\begin{thebibliography}{m}



\bibitem{ADN59}
Agmon, S.,   Douglis, A.,  Nierenberg, L. (1959).   
\newblock Estimates near the boundary for solutions of elliptic partial differential equations satisfying general boundary conditions I.  
\newblock{\em Comm. Pure Appl. Math.} {\bf 12}, 623--727.  

\bibitem{AGI08}
Aimar, H.,  G\'omez, I., Iaffei, B. (2008). 
\newblock {Parabolic mean values and maximal estimates for gradients of temperatures.}
\newblock {\em J. Funct. Anal.} {\bf 255}, 1939--1956.

\bibitem{AGI10}
Aimar, H.,  G\'omez, I., Iaffei, B. (2010).
\newblock {On Besov regularity of temperatures.}
\newblock {\em J. Fourier Anal. Appl.} {\bf 16}, 1007--1020.

\bibitem{AG12}
Aimar, H., G\'omez, I. (2012).
\newblock {Parabolic Besov regularity for the heat equation.}
\newblock {\em Constr. Approx.} {\bf 36}, 145--159.

\bibitem{Ama19}
Amann, H. (2019).   
\newblock {\em Linear and quasilinear parabolic problems, Vol. II: Function spaces}.
\newblock{Monographs in Mathematics}, Vol. 106, Birkh\"{a}user/Springer, Cham. 

\bibitem{AH08}
 Anh, N. T. and Hung, N. M. (2008).
\newblock Regularity of solutions of initial-boundary value problems for parabolic equations in domains with conical points.
\newblock {\em J. Differential Equations} {\bf 245},  no. 7, 1801--1818.



\bibitem{BG97}
Babuska, I., Guo, B. (1997).
\newblock Regularity of the solutions for elliptic problems on nonsmooth domains in $\real^3$, Part I: countably normed spaces on polyhedral domains.
\newblock {\em Proc. Roy. Soc. Edinburgh Sect. A}, {\bf 127}, 77--126.


\bibitem{BMNZ}
{Bacuta, C.,  Mazzucato, A., Nistor, V., and  Zikatanov, L.} (2010). 	
{Interface and mixed boundary value problems on $n$-dimensional polyhedral domains}.
{\em  Doc. Math.} {\bf 15}, 687--745.




\bibitem{CW20}
Cioica-Licht, P. and Weimar, M. (2020).
\newblock {On the limit regularity in Sobolev and Besov scales related to approximation theory.}
\newblock {\em J. Fourier Anal. Appl.} {\bf 26}(1), 10.



\bibitem{Cost19}
Costabel, M. (2019).
\newblock {On the limit Sobolev regularity for Dirichlet and Neumann problems on Lipschitz domains.}
\newblock {\em Math. Nachr.} {\bf 292}, 2165--2173.



 \bibitem{Dah98} 
Dahlke, S. (1998).  
\newblock Besov regularity for elliptic boundary value problems with variable coefficients. 
\newblock {\em Manuscripta Math.} {\bf 95}, 59--77.



\bibitem{Dah99a}
Dahlke, S. (1999).  
\newblock Besov regularity for interface problems. 
\newblock {\em Z. Angew. Math. Mech.} {\bf 79}(6),  
       383--388.     
       
 \bibitem{Dah99b}
Dahlke, S. (1999).  
\newblock Besov regularity for elliptic boundary value problems on polygonal domains.  
\newblock {\em Appl. Math.  Lett.} {\bf 12}(6),  31--38.

\bibitem{Dah02} 
Dahlke, S. (2002).  
\newblock Besov regularity of edge singularities for the Poisson equation in polyhedral
domains.  
\newblock {\em Num. Linear Algebra  Appl.} {\bf 9}(6--7), 457--466. 


\bibitem{DDD}
Dahlke, S., Dahmen, W.,  DeVore, R. (1997). 
\newblock {Nonlinear approximation and adaptive techniques for solving elliptic operator equations}.  
\newblock {\em Multiscale Wavelet Methods for Partial Differential Equations, (W. Dahmen, A.J. Kurdila, and P. Oswald, eds), Wavelet Analysis and Applications}, vol. 6, Academic Press, San Diego,  237-283.  



\bibitem{DDV97}
Dahlke, S.,  DeVore, R. (1997). 
\newblock Besov regularity for elliptic boundary value problems. 
\newblock {\em Comm.  Partial Differential Equations} {\bf 22}(1-2),   1--16. 


\bibitem{DDHSW}
Dahlke, S., Diening, L., Hartmann, C., Scharf, B., Weimar, M. (2016).  
\newblock Besov regularity of solutions to the p--Poisson equation.  
\newblock {\em Nonlinear Anal.} {\bf 130},  298--329, 2016. 



\bibitem{DHSS20}
Dahlke, S., Hansen, M., Schneider, C., and Sickel, W. (2020). 
\newblock Properties of Kondratiev spaces.
\newblock {\em Preprint-Reihe Philipps University Marburg}, Bericht Mathematik Nr. 2018-06. 
\newblock { (arXiv:1911.01962)}






\bibitem{DS19}
Dahlke, S. and Schneider, C. (2019).   
\newblock Besov regularity of parabolic and hyperbolic PDEs. 
\newblock {\em Anal. Appl.} {\bf 17}, no. 2,  235--291. 



\bibitem{DS20}
Dahlke, S. and Schneider, C. (2021).   
\newblock Regularity in Sobolev and Besov spaces for parabolic problems on domains of polyhedral type. 
\newblock{\em To appear in J. Geom. Anal.}. 


\bibitem{DL93}
DeVore, R.A. and  Lorentz, G.G. (1993)
\newblock {\em Constructive approximation}.
\newblock Grundlehren der Mathematischen Wissenschaften {303},
\newblock Springer, Berlin. 




\bibitem{Dobro}
Dobrowolski M. (2010).   
\newblock {\em Angewandte Funktionalanalysis}.
\newblock{Springer-Lehrbuch Masterclass}, Springer, Berlin. 


\bibitem{Eva10}
Evans, L. C.  (2010). 
\newblock Partial differential equations.
\newblock {\em Graduate Studies in Mathematics} {\bf 19}, 2nd edition, American Mathematical Society, Providence, RI. 




\bibitem{Gris92}
Grisvard, P. (1992).
\newblock {\em Singularities in boundary value problems.}
\newblock {Recherches en math\'ematiques appliqu\'ees}, vol. 22, Masson, Paris;  Springer-Verlag, Berlin. 


\bibitem{Gris11}
Grisvard, P. (2011).
\newblock {\em Elliptic problems in nonsmooth domains.}
\newblock Reprint of the 1985 original. { Classics in Applied Mathematics}, 69, SIAM, Philadelphia. 



\bibitem{Han15}
Hansen, M. (2015).
\newblock Nonlinear approximation rates and Besov regularity for elliptic PDEs on polyhedral domains.
\newblock {\em Found. Comput. Math.} {\bf 15}, 561--589.

\bibitem{HW18}
Hartmann, C. and Weimar, M. (2018).
\newblock Besov regularity of solutions to the $p$-Poisson equation in the vicinity of a vertex of a polygonal domain.
\newblock {\em Results Math.} {\bf 73}(41), 1--28.


\bibitem{Jan99}
K. J\"anich (1999).
\newblock {\em Topologie.}
\newblock Springer-Verlag, Berlin. 

\bibitem{JK95}
Jerison, D., Kenig, C.E. (1995).
\newblock The inhomogeneous Dirichlet problem in Lipschitz domains.
\newblock {\em J. Funct. Anal.} {\bf 130}, 161--219. 



\bibitem{KMR01}
Kozlov, V.A., Mazya, V.G., and Rossman, J. (2001).
\newblock Spectral problems associated with corner singularities of solutions to elliptic equations.  
\newblock {\em Mathematical Surveys and Monographs},  {\bf 85}, American Mathematical Society, Providence, RI. 

\bibitem{Kre18}
Kreuter, M. (2018)
\newblock {\em Vector-valued elliptic boundary value problems on rough domains.}  
\newblock PhD thesis, University of Ulm. 

\bibitem{Lan01}
Lang, J. (2001). 
\newblock {\em Adaptive multilevel solution of nonlinear parabolic {PDE} systems}. 
\newblock {Lecture Notes in Computational Science and Engineering; Theory, algorithm, and applications}, vol. 16, Springer-Verlag, Berlin. 



\bibitem{LL15}
Luong, V.T., Loi, D.V. (2015).
\newblock The first initial-boundary value problem for parabolic equations in a cone with edges.
\newblock {\em Vestn. St.-Petersbg. Univ. Ser. 1. Mat. Mekh. Asron.  2}, {\bf 60}(3), 394--404.



\bibitem{MR10}
Maz'ya, V.,  Rossmann, J. (2010).
\newblock{\em Elliptic equations in polyhedral domains}. 
\newblock{Mathematical Surveys and Monographs}, vol. 162, American Mathematical Society. 


\bibitem{NistorMazzucato}
{Mazzucato, A. and  Nistor, V.} (2010). 
\newblock {Well posedness and regularity for the elasticity equation with mixed boundary conditions on
			polyhedral domains and domains with cracks}. 
\newblock{\em  Arch. Ration. Mech. Anal.} {\bf 195}, 25--73.




\bibitem{MN75}
Morris, S. A. and Noussair, E. S. (1975).   
\newblock The Schauder-Tychonoff Fixed Point Theorem and Applications.
\newblock{\em Matematick\'y \v{c}asopsis} {\bf 25}, 165--172.   {\em URL:  http://dml.cz/dmlcz/126953}

\bibitem{Rud76}
Rudin, W. (1976).
\newblock {\em Principles of mathematical analysis}.
\newblock {International Series in Pure and Applied Mathematics}, McGraw-Hill Book Co., New York-Auckland-D\"{u}sseldorf. 


\bibitem{RS96}
Runst, T., Sickel, W. (1996).
\newblock {\em Sobolev spaces of fractional order, {N}emytskij operators, and
              nonlinear partial differential equations}.
\newblock {De Gruyter Series in Nonlinear Analysis and Applications}. 




\bibitem{co-buch}
 Schneider, C. (2021). 
 \newblock  Beyond Sobolev and Besov:  Regularity of solutions of PDEs and their  traces in function spaces. 
 \newblock {\em LNM}, {\bf 2291}, Springer. 


\bibitem{co-habil}
 Schneider, C. (2020). 
 \newblock  Besov regularity of partial differential equations, and traces in function spaces. 
 \newblock {\em Habilitationsschrift}, Philipps-Universit\"at Marburg. 



\bibitem{SS09} 
Stevenson, R.,  Schwab, C. (2009). 
\newblock Space-time adaptive wavelet methods for parabolic evolution problems.  
\newblock {\em Math. Comput.},  {\bf 78}, 1293--1318. 


\bibitem{Tho06}
Thom\'e{}e, V. (2006). 
\newblock {\em Galerkin finite element methods for parabolic problems}. 
\newblock  {
Springer Series in Computational Mathematics}, vol. 25, Springer-Verlag, Berlin, 2nd edition. 

\bibitem{Tri83}
Triebel, H. (1983).
\newblock {\em Theory of Function spaces}.
\newblock { Monographs in Mathematics}, vol. 78, Birkh\"auser Verlag, Basel. 


\bibitem{Tri08}
Triebel, H. (2008).
\newblock {\em Function spaces and wavelets on domains}.
\newblock {EMS Tracts on mathematics 7}, EMS Publishing House, Z\"urich. 






\end{thebibliography}
\end{document}